\newcommand{\bbC}{\ensuremath{\mathbb{C}}}
\newcommand{\bbR}{\ensuremath{\mathbb{R}}}
\newcommand{\bbZ}{\ensuremath{\mathbb{Z}}}
\newcommand{\bbB}{\ensuremath{\mathbb{B}}}
\newcommand{\frg}{\ensuremath{\mathfrak{g}}}
\newcommand{\frh}{\ensuremath{\mathfrak{h}}}
\newcommand{\fro}{\ensuremath{\mathfrak{o}}}
\DeclareMathOperator{\id}{id}
\DeclareMathOperator{\rk}{rk}
\DeclareMathOperator{\vol}{vol}
\DeclareMathOperator{\isom}{Isom}
\DeclareMathOperator{\Aut}{Aut}
\DeclareMathOperator{\Inn}{Inn}
\DeclareMathOperator{\Out}{Out}
\DeclareMathOperator{\PSL}{PSL}
\DeclareMathOperator{\SO}{SO}
\DeclareMathOperator{\PSO}{PSO}
\DeclareMathOperator{\SU}{SU}
\numberwithin{equation}{section}
\newtheorem{thmnr}{Theorem}[section]
\newtheorem{propnr}[thmnr]{Proposition}
\newtheorem{lemnr}[thmnr]{Lemma}
\newtheorem{cornr}[thmnr]{Corollary}
\theoremstyle{definition}
\newtheorem{rmknr}[thmnr]{Remark}
\newtheorem{exnr}[thmnr]{Example}
\newtheorem{claimnr}[thmnr]{Claim}
\newtheorem{quesnr}[thmnr]{Question}
\newcounter{tmp}
\begin{document}

\title{Isometry types of frame bundles}
\author{Wouter van Limbeek}
\email{wouterv@umich.edu}
\address{Department of Mathematics, University of Michigan, Ann Arbor, MI 48109.}

\date{\today}

\begin{abstract} We consider the oriented orthonormal frame bundle $\SO(M)$ of an oriented Riemannian manifold $M$. The Riemannian metric on $M$ induces a canonical Riemannian metric on $\SO(M)$. We prove that for two closed oriented Riemannian $n$-manifolds $M$ and $N$, the frame bundles $\SO(M)$ and $\SO(N)$ are isometric if and only if $M$ and $N$ are isometric, except possibly in dimensions 3, 4, and 8. This answers a question of Benson Farb except in dimensions 3, 4, and 8.\end{abstract}

\maketitle
\tableofcontents

\section{Introduction}
\label{sec:intr}
Let $M$ be an oriented Riemannian manifold, and let $X:=SO(M)$ be the oriented orthonormal frame bundle of $M$. The Riemannian structure $g$ on $M$ induces in a canonical way a Riemannian metric $g_{\SO}$ on $\SO(M)$. This construction was first carried out by O'Neill \cite{oneillsasaki} and independently by Mok \cite{mok}, and is very similar to Sasaki's construction of a metric on the unit tangent bundle of $M$ \cite{sasaki1,sasaki2}, so we will henceforth refer to $g_{\SO}$ as the \emph{Sasaki-Mok-O'Neill metric} on $\SO(M)$. Let us sketch the construction of $g_{\SO}$ and refer to Section \ref{sec:prelim} for the details. Consider the natural projection $\pi:SO(M)\rightarrow M$. Each of the fibers of $p$ is naturally equipped with a free and transitive $\SO(n)$-action, so that this fiber carries an $\SO(n)$-bi-invariant metric $g_\mathcal{V}$. The metric $g_{\mathcal{V}}$ is determined uniquely up to scaling. Further, the Levi-Civita connection on the tangent bundle $TM\rightarrow M$ induces a horizontal subbundle of $TM$. This in turn induces a horizontal subbundle $\mathcal{H}$ of $TSO(M)$. We can pull back the metric on $M$ along $\pi$ to get a metric $g_\mathcal{H}$ on $\mathcal{H}$. The Sasaki-Mok-O'Neill metric on $\SO(M)$ is defined to be $g_\mathcal{\SO}:=g_\mathcal{V}\oplus g_\mathcal{H}$.

Note that $g_{\SO}$ is determined uniquely up to scaling of $g_\mathcal{V}$, and hence determined uniquely after fixing a bi-invariant metric on $\SO(n)$. The work of O'Neill \cite{oneillsasaki}, Mok \cite{mok}, and later Takagi-Yawata \cite{isomframe1,isomframe2} have established many natural properties of Sasaki-Mok-O'Neill metrics and connections between the geometry of $M$ and $\SO(M)$. The following natural question then arises, which was to my knowledge first posed by Benson Farb.
	\begin{quesnr} Let $M,N$ be Riemannian manifolds. If $\SO(M)$ is isometric to $\SO(N)$ (with respect to Sasaki-Mok-O'Neill metrics on each), is $M$ isometric to $N$?\label{ques:main}\end{quesnr}
The purpose of this paper is to answer Question \ref{ques:main} except when $\dim M=3, 4$ or $8$. The question is a bit subtle, for it is not true in general that an isometry of $\SO(M)$ preserves the fibers of $\SO(M)\rightarrow M$ as shown by the following example. 
\begin{exnr} Let $M$ be a constant curvature sphere $S^n$. Then $\SO(M)$ is diffeomorphic to $\SO(n+1)$. (To see this, identify $S^n$ with the unit sphere in $\bbR^{n+1}$. If $p\in S^n$ and $v_1,\dots,v_n$ is a positively oriented orthonormal frame at $p$, then the matrix with columns $p,v_1,\dots,v_n$ belongs to $\SO(n+1)$.) There is a unique Sasaki-Mok-O'Neill metric that is isometric to the bi-invariant metric on $\SO(n+1)$. However, of course there are many isometries of $\SO(n+1)$ that do not preserve the fibers of $\SO(n+1)\rightarrow S^n$. \label{ex:nonpres}\end{exnr}
By differentiating the action of $\SO(n+1)$ in the above example, we obtain many Killing fields that do not preserve the fibers of $\SO(n+1)\rightarrow S^n$. However, by a theorem of Takagi-Yawata \cite{isomframe1}, manifolds with constant positive curvature are the only Riemannian manifolds whose orthonormal frame bundles admit Killing fields that do not preserve the fibers. More examples of non-fiber-preserving isometries appear if we consider isometries that are not induced by Killing fields, as the following example shows.
\begin{exnr} Let $M$ be a flat 2-torus obtained as the quotient of $\bbR^2$ by the subgroup generated by translations by $(l_1, 0)$ and $(0, l_2)$ for some $l_1, l_2>0$. Further fix $l_3>0$ and equip $\SO(M)$ with the Sasaki-Mok-O'Neill metric associated to the scalar $l_3$. It is easy to see $\SO(M)$ is the flat 3-torus obtained as the quotient of $\bbR^3$ by the subgroup generated by translations by $(l_1, 0, 0), (0, l_2, 0)$ and $(0, 0, l_3)$.

Now let $N$ be the flat 2-torus obtained as the quotient of $\bbR^2$ by the subgroup generated by translations by $(l_1, 0)$ and $(0, l_3)$, and equip $\SO(N)$ with the Sasaki-Mok-O'Neill metric associated to the scalar $l_2$. Then $\SO(M)$ and $\SO(N)$ are isometric but if $l_1, l_2, l_3$ are distinct, $M$ and $N$ are not isometric. 

On the other hand if $l_1=l_3\neq l_2$, then this construction produces an isometry $\SO(M)\rightarrow SO(M)$ that is not a bundle map. \label{ex:swaptori}\end{exnr}

Example \ref{ex:swaptori} produces counterexamples to Question \ref{ques:main}. Note that we used different bi-invariant metrics $g_\mathcal{V}$ on the fibers. Therefore to give a positive answer to Question \ref{ques:main} we must normalize the volume of the fibers of $\SO(M)\rightarrow M$.

Our main theorem is that under the assumption of normalization Question \ref{ques:main} has the following positive answer, except possibly in dimensions $3, 4$ and $8$.

\begingroup
\setcounter{tmp}{\value{thmnr}}
\setcounter{thmnr}{0} 
\renewcommand\thethmnr{\Alph{thmnr}}
	\begin{thmnr}Let $M,N$ be closed oriented connected Riemannian $n$-manifolds. Equip $\SO(M)$ and $\SO(N)$ with Sasaki-Mok-O'Neill metrics where the fibers of $\SO(M)\rightarrow M$ and $\SO(N)\rightarrow N$ have fixed volume $\nu>0$. Assume $n\neq 3,4,8$. Then $M,N$ are isometric if and only if $\SO(M)$ and $\SO(N)$ are isometric.
	\label{thm:main}\end{thmnr}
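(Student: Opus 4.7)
The easy direction is clear: an isometry $f:M\to N$ induces the bundle map $F(f):F(M)\to F(N)$ sending a frame $(e_1,\ldots,e_n)$ to $(df(e_1),\ldots,df(e_n))$, and this is an isometry for the Sasaki metrics by naturality of the construction.

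For the converse, let $\phi:F(M)\to F(N)$ be an isometry. The plan is to show that $\phi$ sends fibers of $\pi_M:F(M)\to M$ to fibers of $\pi_N:F(N)\to N$; once this is established, $\phi$ descends to a diffeomorphism $M\to N$ which is automatically an isometry, since the horizontal summand of the Sasaki metric is pulled back from the base. To establish fiber-preservation I would characterize the principal $SO(n)$-action intrinsically inside the isometry group of $F(M)$. By the Takagi-Yawata theorem recalled in the introduction, if $M$ is not of constant positive curvature then every Killing field of $F(M)$ preserves the fibers, so the identity component $G_M:=\isom(F(M))^0$ acts on $M$ by isometries via a homomorphism $G_M\to\isom(M)$. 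The principal $SO(n)$-action is itself by isometries of $F(M)$ (the fiber metric is bi-invariant of fixed volume $\lambda$), giving a closed subgroup $K_M\subset G_M$ whose orbits are exactly the fibers of $\pi_M$, each orbit totally geodesic and isometric to $SO(n)$ with a bi-invariant metric of volume $\lambda$.

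Conjugation by $\phi$ yields an isomorphism $G_M\cong G_N$ carrying $K_M$ to some subgroup $K'\subset G_N$ that is abstractly isomorphic to $SO(n)$ and acts freely on $F(N)$ with orbits that are totally geodesic and isometric to $SO(n)$ with the specified bi-invariant metric. The key step is to show that any such $SO(n)$-subgroup of $G_N$ must be conjugate in $G_N$ to the principal subgroup $K_N$, provided $n\neq 3,4,8$. Granted this, after replacing $\phi$ by $\psi\circ\phi$ for an appropriate $\psi\in G_N$ we arrange $\phi_*K_M=K_N$, so $\phi$ sends $K_M$-orbits to $K_N$-orbits and therefore descends to the desired isometry $M\to N$. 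The constant positive curvature case should be handled separately: there $F(M)$ is a finite quotient of $SO(n+1)$ with bi-invariant metric, and an isometry $F(M)\cong F(N)$ forces $F(N)$ to share its universal cover and metric, from which one deduces that $N$ is a space form of the same curvature isometric to $M$.

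The main obstacle is precisely the uniqueness-up-to-conjugation of the principal $SO(n)$-subgroup of $G_N$, and the exceptional dimensions $n=3,4,8$ are those where the rigidity breaks down. In dimension $4$ the splitting $\mathfrak{so}(4)=\mathfrak{so}(3)\oplus\mathfrak{so}(3)$ and the isogeny $SU(2)\times SU(2)\to SO(4)$ produce non-standard $SO(4)$-subgroups inside larger isometry groups. In dimension $3$ the coincidence $\dim SO(3)=3=\dim M$ conflates the roles of fiber and base, as already illustrated by Example \ref{ex:swaptori} in the flat case. In dimension $8$ the triality automorphism of $\mathfrak{so}(8)$ produces three non-equivalent $8$-dimensional representations of $SO(8)$, giving potentially non-conjugate free $SO(8)$-subgroups in certain isometry groups. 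Outside these three dimensions, the structure theory of compact Lie groups together with the constraint that $K_M$ acts freely with totally geodesic orbits of prescribed isometry type should be enough to pin down $K_M$ uniquely up to conjugation.
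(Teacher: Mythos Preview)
Your outline captures the right overall strategy---characterize the vertical $SO(n)$-action intrinsically inside $\isom(F(M))$---but the argument is incomplete at precisely the crucial point. You write that ``the key step is to show that any such $SO(n)$-subgroup of $G_N$ must be conjugate in $G_N$ to the principal subgroup $K_N$,'' then proceed with ``granted this,'' and close by saying that structure theory ``should be enough.'' This is a programme, not a proof, and in fact the conjugacy claim is neither established in the paper nor obviously true: when $i_V^N$ lands inside $i(M)$ (which genuinely happens, e.g.\ for $M\cong\bbC P^3$ with $n=6$), the two copies of $\mathfrak{o}(n)$ sit in different ideals of $i(X)$ and there is no evident inner automorphism exchanging them.

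The paper bypasses the conjugacy question entirely. It uses the precise Takagi--Yawata decomposition $i(X)=((\Lambda^2 M)_0\rtimes i(M))\oplus i_V^M$ (not merely ``Killing fields preserve fibers'') and projects $i_V^N$ onto the summands. Either $i_V^N=i_V^M$ and one finishes as you indicate; or $\mathfrak{o}(n)\subseteq(\Lambda^2 M)_0$, which forces $M$ flat and is handled directly; or $\mathfrak{o}(n)$ embeds in $i(M)$. In this last case one has an effective isometric $SO(n)$-action on the $n$-manifold $M$ itself, and the paper invokes the Kobayashi--Nagano classification of closed $n$-manifolds admitting an isometry group of dimension $\tfrac{n(n-1)}{2}$ (together with Wang's gap theorem and Kuiper--Obata) to list all possible $M$ and $N$ explicitly: $S^n$ or $\bbR P^n$ with a warped metric, $S^{n-1}$- or $\bbR P^{n-1}$-bundles over $S^1$, a $D_\infty$-quotient of $S^{n-1}\times\bbR$, and $\bbC P^3$ when $n=6$. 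It then checks $M\cong N$ case by case via concrete geometric arguments (matching volumes, showing the bundles are Riemannian products via a Johnson--Whitt totally-geodesic splitting, comparing the warping functions through the double quotient $X/(I_V^M I_V^N)$, etc.). No general conjugacy statement for $SO(n)$-subgroups of $\isom(F(N))$ is proved or needed.

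Two smaller omissions: you do not treat $n=2$, where the Takagi--Yawata formula is unavailable and the paper gives a separate argument using the classification of surfaces and low-dimensional Lie groups; and your explanation of why $3,4,8$ are excluded is speculative---the actual reason in the paper is simply that Equation~\eqref{eq:ty} is not known to hold in those dimensions.
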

	\endgroup
\setcounter{thmnr}{\thetmp} 
We do not know if counterexamples to Question \ref{ques:main} exist in dimensions 3, 4, and 8.
\subsection*{Outline of proof} If $f:M\rightarrow N$ is an isometry, then the induced map $\SO(f):SO(M)\rightarrow SO(N)$ is also an isometry (see Proposition \ref{prop:isomisom}). This proves one direction of the theorem.

For the other direction, our strategy is to identify the fibers of the bundle $\SO(M)\rightarrow M$ using only the geometry of $\SO(M)$. To accomplish this, note that $X=SO(M)$ carries an action of $\SO(n)$ by isometries, and the orbits of this action are exactly the fibers of $\SO(M)\rightarrow M$. This action gives rise to an algebra of Killing fields isomorphic to $\mathfrak{o}(n)$. 

The full Lie algebra $i(X)$ of Killing fields on $X=SO(M)$ has been computed by Takagi-Yawata \cite{isomframe2} except in dimensions 2, 3, 4 or 8, or when $M$ has positive constant curvature. We show that if this computation applies, either $i(X)$ contains a unique copy of $\mathfrak{o}(n)$ or Isom$(M)$ is extremely large or $M$ is flat. If $i(X)$ contains a unique copy of $\mathfrak{o}(n)$, then the fibers of $X=SO(M)\rightarrow M$ and $X=SO(N)\rightarrow N$ coincide, and we deduce that $M$ and $N$ are isometric.

We are able to resolve the flat case separately. If Isom$(M)$ is large we use classifcation theorems from the theory of compact transformation groups to prove that $M$ and $N$ are isometric. 

Finally we prove the theorem in two situations where the computation of Takagi-Yawata does not apply, namely constant positive curvature and dimension 2. In these situations it is in general impossible to identify the fibers of $\SO(M)\rightarrow M$ using the geometry of $\SO(M)$ alone as shown by Examples \ref{ex:nonpres} and \ref{ex:swaptori}. However, we are still able to obtain the main result using the scarcity of manifolds with a metric of constant positive curvature, and the classification of surfaces.

\subsection*{Outline of the paper} In Section \ref{sec:act} we will review preliminaries about actions of Lie groups of $G$ on a manifold $M$ when $\dim G$ is large compared to $\dim M$. In Section \ref{sec:main} we will prove the Main Theorem \ref{thm:main} except when $M$ and $N$ are surfaces or have metrics of consnt positive curvature. The proof in the case that at least one of $M$ or $N$ has constant positive curvature will be given in Section \ref{sec:kcnst}. We prove Theorem \ref{thm:main} in the case that $M$ and $N$ are surfaces in Section \ref{sec:surf}.
\subsection*{Acknowledgments} I am very grateful to my thesis advisor Benson Farb for posing Question \ref{ques:main} to me, his extensive comments on an earlier version of this paper, and his invaluable advice and enthusiasm during the completion of this work. I am indebted to an anonymous referee for many helpful suggestions. I would like to thank the University of Chicago for support.

\section{Preliminaries}\label{sec:prelim}

In this section we introduce the Sasaki-Mok-O'Neill metric, and we recall some basic properties. Then we discuss the classical relationship between isometries and Killing fields, and Takagi-Yawata's computations of Killing fields of Sasaki-Mok-O'Neill metrics. We end this section with a useful lemma for normalizing Sasaki-Mok-O'Neill metrics, and some general remarks about frame bundles of fiber bundles that will also be useful later.

\subsection{Definition of the Sasaki-Mok-O'Neill metric.} Our discussion here follows the construction of Mok \cite{mok}, where more details can be found. Let $(M,g)$ be an oriented Riemannian manifold of dimension $n$, and let $X:=\SO(M)$ be the oriented orthonormal frame bundle of $M$ with natural projection map $\pi:\SO(M)\rightarrow M$. For $e\in \SO(M)$, the \emph{vertical subspace} at $e$ is defined to be $\mathcal{V}_e:=\ker D_e\pi$. The collection of vertical subspaces forms a subbundle $\mathcal{V}\rightarrow TM$ of $T\SO(M)\rightarrow TM$.

Let $\omega$ be the Riemannian connection $\fro(n)$-valued 1-form associated to the Riemannian metric on $M$. Explicitly, if $p\in M$ and $e=(e_1,\dots,e_n)$ is a frame at $p$, we define for $X\in T_e SO(M)$:
$$\omega_{ij}(X):=\theta_j(\nabla_X(e_i)) \hspace{1.5 cm} (1\leq i,j\leq n),$$
where $\theta_j$ is the form dual to $e_j$ with respect to the Riemannian metric $g$. 

We set $\mathcal{H}_e:=\ker\omega_e$. We call $\mathcal{H}_e$ the \emph{horizontal subspace} at $e$. We have a decomposition $T_e\SO(M)=\mathcal{V}_e\oplus\mathcal{H}_e$. Define an inner product on $T_e \SO(M)$ via
	$$g_{\SO}(X,Y)=\langle \omega(X),\omega(Y)\rangle+g(\pi_\ast X,\pi_\ast Y)$$
where $\langle\cdot , \cdot \rangle$ is an $O(n)$-invariant inner product on $\fro(n)$. Note that the choice of an $O(n)$-invariant inner product on $\fro(n)$ is uniquely determined up to scaling by a positive number $\lambda$, so that we obtain a 1-parameter family of Sasaki-Mok-O'Neill metrics. Explicitly such an inner product is given by
	$$\langle A, B\rangle_\lambda := -\lambda~\textrm{tr}(AB)= \lambda \sum_{i,j} A_{ij} B_{ij}$$
for $A,B\in\fro(n)$. We call $\langle \cdot, \cdot \rangle_1$ the \emph{standard metric} on $\fro(n)$.

\begin{rmknr} The oriented orthonormal frame bundle $\SO(M)\rightarrow M$ is an example of a $\SO(n)$-principal bundle of over $M$, and it has a natural connection form $\omega$ as defined above. For a principal $G$-bundle $E\rightarrow B$ with a principal connection form $\theta$, one can construct a so-called connection metric (see e.g. \cite[Section 1]{zillerfat}). The Sasaki-Mok-O'Neill metric is exactly this connection metric in the case of the principal $\SO(n)$-bundle $\SO(M)\rightarrow M$ with the connection form $\omega$. \end{rmknr}

As mentioned above, the geometry of the above-defined metric was first investigated by O'Neill and Mok. In particular they showed:
	\begin{propnr}[{O'Neill \cite[p. 467]{oneillsasaki}, Mok \cite[Theorem 4.3]{mok}}]\label{prop:fibertotgeod} The fibers of $\SO(M)\rightarrow M$ are totally geodesic submanifolds of $\SO(M)$ with respect to any Sasaki-Mok-O'Neill metric.\end{propnr}

\subsection{Vector fields on frame bundles.} Let $X$ be a vector field on $\SO(M)$. If $X_e\in \mathcal{V}_e$ for any $e\in\SO(M)$, we say $X$ is \emph{vertical}. If $X_e\in \mathcal{H}_e$ for any $e\in \SO(M)$, we say $X$ is \emph{horizontal}. 

We will now discuss how to lift a vector field $Y$ on $M$ to a vector field $X$ on $\SO(M)$ such that $\pi_\ast X= Y$. There are two useful constructions, called the \emph{horizontal} and \emph{complete} lift of $Y$. Both constructions start by considering the derivative of the bundle map $\pi:\SO(M)\rightarrow M$. For a frame $e\in \SO(M)$, we have a decomposition $T_e \SO(M)=\mathcal{V}_e\oplus \mathcal{H}_e$ as discussed above. Here $\mathcal{V}_e = \ker\pi_\ast$, and hence $\pi_\ast$ restricts to an isomorphism $\mathcal{H}_e\rightarrow T_{\pi(e)}M$. Therefore for a vector field $Y$ on $M$, there exists a unique horizontal vector field $Y^H$ on $M$ with $Y=\pi_\ast Y^H$. We call $Y^H$ the \emph{horizontal lift} of $Y$.

The \emph{complete lift} $Y^C$ of $Y$ was first introduced by Kobayashi-Nomizu  \cite{KN63}. First observe that given a map $f:M\rightarrow M$, we can consider its induced map $\SO(f):\SO(M)\rightarrow \SO(M)$ on frames. Then we can define $Y^C$ as follows: Let $\varphi_t$ be the 1-parameter family of diffeomorphims of $M$ obtained by integrating $Y$, so that $Y=\frac{d}{dt}\Bigr|_{\substack{t=0}} \varphi_t$. Then we define
	$$Y^C:=\frac{d}{dt}\Bigr|_{\substack{t=0}} \SO(\varphi_t).$$
Note that $Y^C$ is in general neither vertical nor horizontal. Mok has given a description of $Y^C$ in terms of local coordinates \cite[Section 3]{moklift}.
		
\subsection{Killing fields and isometries.} Before considering the isometries of $\SO(M)$ equipped with a Sasaki-Mok-O'Neill metric $g_{\SO}$, we will review some classical facts about the structure of the group of isometries Isom$(M)$ of a Riemannian manifold $M$.

Myers-Steenrod \cite{myerssteenrod} have proved that Isom$(M)$ of a Riemannian manifold is a Lie group. If $(h_t)_t$ is a 1-paremeter group of isometries, then $Y:=\frac{d}{dt}\Bigr|_{\substack{t=0}} h_t$ is a vector field on $M$. Differentiating the condition $h_t^\ast g = g$ gives the \emph{Killing relation} for $Y$: 
	\begin{equation} \label{eq:killing} \mathcal{L}_Y g =0,\end{equation}
where $\mathcal{L}$ is the Lie derivative. Any vector field $Y$ satisfying Equation \ref{eq:killing} is called a Killing field. Given a Killing field $Y$ on $M$, the 1-parameter group $(h_t)_t$ obtained by integrating $Y$ consists of isometries. The Killing fields on $M$ form a Lie algebra $i(M)$ of vector fields. We have:
	\begin{thmnr}\label{thm:isomkilling} Let $M$ be a Riemannian manifold. Then \emph{Isom}$(M)$ is a Lie group (possibly not connected), with Lie algebra $i(M)$.\end{thmnr}	 

\subsection{The Takagi-Yawata theorem on Killing fields.}\label{sec:taya} We will now discuss a complete description due to Takagi-Yawata \cite{isomframe2} of the Killing fields on $\SO(M)$ in terms of the geometry of $M$ for many manifolds $M$. Let us first discuss three constructions of Killing fields on $\SO(M)$.

For the first construction, recall that Sasaki showed (see \cite[Corollary 1]{sasaki1}) that whenever $f:M\rightarrow M$ is an isometry of $M$, the derivative $Df: TM\rightarrow TM$ is an isometry of $TM$ (where $TM$ is equipped with a Sasaki metric). Therefore if $Y$ is a Killing field on $M$, then the complete lift of $Y$ is a Killing field on $TM$. This is also true for frame bundles:
	\begin{propnr}[{Mok \cite[Proposition 5.3]{mok}}] If $Y$ is a Killing field on $M$, then $Y^C$ is a Killing field on $\SO(M)$ with respect to any Sasaki-Mok-O'Neill metric.\end{propnr}
In fact the following more general statement is true:
	\begin{propnr}\label{prop:isomisom} Let $M$ be a Riemannian manifold and $f:M\rightarrow M$ any isometry. Then the induced map $\SO(f):\SO(M)\rightarrow\SO(M)$ is an isometry of $\SO(M)$ with respect to any Sasaki-Mok-O'Neill metric.\end{propnr}
	\begin{proof} Note that since the Riemannian connection form $\omega$ is canonically associated to the metric, we have $f^\ast \omega=\omega$. In particular $\SO(f)$ preserves the horizontal subbundle $\mathcal{H}:=\ker\omega$. Also note that $\SO(f)$ is a bundle map of $\pi:\SO(M)\rightarrow M$ (i.e. we have $\SO(f)\circ\pi=\pi\circ f$), and in particular $\SO(f)$ preserves the vertical subbundle $\mathcal{V}:=\ker\pi_\ast$. Using these facts it is easy to check $\SO(f)$ is an isometry.\end{proof}
The second construction of Killing fields comes from the structure of $\SO(M)\rightarrow M$ as a principal $\SO(n)$-bundle. There is an action of $\SO(n)$ on the fibers of $\SO(M)\rightarrow M$, which is easily seen to be isometric with respect to any Sasaki-Mok-O'Neill metric. Differentiating any 1-parameter subgroup of $\SO(n)$ then gives a Killing field on $\SO(M)$. Explicitly, we can define these as follows: For $A\in \fro(n)$, define the vector field $A^\ast$ on $\SO(M)$ via $\omega(A^\ast) = A$ and $\pi_\ast(A^\ast)=0$, where $\omega$ is the connection form as above. Then $A^\ast$ is a vertical Killing field. Write $i_V^M$ for the Killing fields thus obtained. In particular $i_V^M\cong \fro(n)$ as Lie algebras.

Finally, here is the third construction of a Killing field on $\SO(M)$. Let $\varphi$ be a 2-form on $M$, so that it defines a skew-symmetric bilinear form on every tangent space $T_p M$ for  $p\in M$. With respect to a frame $e$ of $T_p M$, the skew-symmetric form $\varphi_p$ can be represented as a skew-symmetric matrix $A_e\in \fro(n)$. We then define a vector field $X^\varphi$ on $\SO(M)$ via $\omega_e(X^\varphi_e):=A_e$ and $\pi_\ast(X^\varphi_e)=0$. Note that the latter condition just means that we define $X^\varphi$ to be a vertical vector field. An explicit computation shows that if $\varphi$ is parallel, then $X^\varphi$ is a Killing field (see e.g. \cite{isomframe1}). Denote by $(\Lambda^2 M)_0$ the Lie algebra of parallel 2-forms on $M$.

 Takagi-Yawata have proved that for many manifolds, the above three constructions are the only ways of producing Killing fields on $\SO(M)$:

\begin{thmnr}[Takagi-Yawata \cite{isomframe2}] Let $M$ be a closed Riemannian manifold and equip $\SO(M)$ with the Sasaki-Mok-O'Neill metric corresponding to the standard inner product $\langle \cdot ,\cdot \rangle_1$ on $\fro(n)$. Suppose $M$ does not have constant curvature $\frac{1}{2}$ and $\dim M\neq 2, 3, 4, 8$. Then for any Killing field $X$ on $\SO(M)$ there exist unique $Y\in i(M), A\in\fro(n)$, and $\varphi\in (\Lambda^2 M)_0$ such that 
	$$X=Y^C + A^\ast + X^\varphi.$$
\label{thm:tyexist}
\end{thmnr}
\begin{rmknr} Of course a version of the above result holds for different Sasaki-Mok-O'Neill metrics as well: If we use the inner product $\langle \cdot, \cdot\rangle_\lambda = \lambda \langle \cdot, \cdot\rangle_1$ on $\fro(n)$, the same conclusion holds except that we should now require that $M$ does not have constant curvature $\frac{1}{2\sqrt{\lambda}}$.\end{rmknr}
An explicit computation shows that if $Y\in i(M), A\in\fro(n)$, and $\varphi\in(\Lambda^2 M)_0$, then the vector fields $Y^C, A^\ast$, and $X^\varphi$ pairwise commute. Combining this with Theorem \ref{thm:tyexist}, we obtain the following Lie algebra decomposition of Killing fields on $\SO(M)$:
\begin{cornr}[Takagi-Yawata \cite{isomframe2}] \label{cor:tydecomp} Let $M$ be a Riemannian manifold satisfying the hypotheses of Theorem \ref{thm:tyexist}. Then there is a Lie algebra decomposition
	$$i(\SO(M))=i(M)\oplus i_V^M \oplus (\Lambda^2 M)_0,$$
where $i(M)$ (resp. $i_V^M, (\Lambda^2 M)_0$) corresponds to the subalgebra of Killing fields consisting of $Y^C$ (resp. $A^\ast, X^\varphi$) for $Y\in i(M)$ (resp. $A\in\fro(n), \varphi\in(\Lambda^2 M)_0$).
\end{cornr}

\subsection{Normalizing volume.} Given a closed oriented Riemannian manifold $M$, we have previously obtained a 1-parameter family of Sasaki-Mok-O'Neill metrics on $M$. These can be parametrized by a choice of $O(n)$-invariant inner product on $\fro(n)$ (which is unique up to scaling), or, equivalently, by the volume of a fiber of $\SO(M)\rightarrow M$. The following easy lemma will be useful to us on multiple occasions in the rest of the paper.

	\begin{lemnr} Fix $\nu>0$. Let $M,N$ be closed orientable connected Riemannian $n$-manifolds and equip $\SO(M)$ and $\SO(N)$ with Sasaki-Mok-O'Neill metrics where the fibers of $\SO(M)\rightarrow M$ and $\SO(N)\rightarrow N$ have volume $\nu$. Suppose that $\SO(M)$ and $\SO(N)$ are isometric. Then $\vol(M)=\vol(N)$.
	\label{lem:vol}
	\end{lemnr}
	
		\begin{proof} Set $X:=SO(M)\cong SO(N)$. Since the fiber bundle $X\rightarrow M$ has fibers with volume $\nu$, we have
			$$\vol(X)=\frac{\vol(M)}{\nu}.$$
		Likewise we have $\vol(X)=\frac{\vol(N)}{\nu}$. Combining these we get $\vol(M)=\vol(N)$.\end{proof}

\section{High-dimensional isometry groups of manifolds}\label{sec:act}
In this section we review some known results about effective actions of a compact Lie group $G$ on a closed $n$-manifold $M$ when $\dim G$ is large compared to $n$. We will be especially interested in actions of $\SO(n)$ on an $n$-manifold $M$. First, there is the following classical upper bound for the dimension of a compact group acting smoothly on an $n$-manifold.
	\begin{thmnr}[{\cite[II.3.1]{kotrgps}}] Let $M$ be a closed $n$-manifold and $G$ a compact group acting smoothly, effectively, and isometrically on $M$. Then $\dim G\leq \frac{n(n+1)}{2}$. Further equality holds if and only if 
		\begin{enumerate}[(i)]
			\item $M$ is isometric to $S^n$ with a metric of constant positive curvature, and $G=\SO(n+1)$ or $\text{O}(n+1)$ acting on $M$ in the standard way, or
			\item $M$ is isometric or $\bbR P^n$ with a metric of constant positive curvature, and $G=\text{PSO}(n+1)$ or $\text{PO}(n+1)$, acting on $M$ in the standard way.
		\end{enumerate}
	\label{thm:maxisom}
	\end{thmnr}
Note that in the above case $G=\textrm{Spin}(n+1)$ does not occur because there is no effective action on $S^n$ or $\bbR P^n$. Theorem \ref{thm:maxisom} leads us to study groups of dimension $<\frac{n(n+1)}{2}$. First, there is the following remarkable `gap theorem' due to H.C. Wang.
	\begin{thmnr}[H.C. Wang {\cite{wanggaps}}] Let $M$ be a closed $n$-manifold with $n\neq 4$. Then there is no compact group $G$ acting effectively on $M$ with
		$$\frac{n(n-1)}{2}+1<\dim G<\frac{n(n+1)}{2}.$$
		\label{thm:wanggaps}
	\end{thmnr}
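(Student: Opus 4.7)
My approach is by contradiction. Suppose $G$ is a compact Lie group acting smoothly and effectively on a closed $n$-manifold $M$ (with $n\neq 4$) and $\tfrac{n(n-1)}{2}+1<\dim G<\tfrac{n(n+1)}{2}$. First I replace $G$ by its identity component (same dimension, still effective) and average a Riemannian metric over $G$, reducing to the case that $G$ is connected and acts by isometries. The plan is then to split the argument according to whether the $G$-action is transitive.

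In the non-transitive case, every orbit has dimension at most $n-1$. Fix any $x$ and set $k=\dim(G\cdot x)\leq n-1$. The isotropy $H_x$ preserves the orthogonal decomposition $T_xM = T_x(G\cdot x)\oplus N_x$ and acts faithfully on $T_xM$ since $G$ acts by isometries, so $H_x\hookrightarrow O(k)\times O(n-k)$. This yields
$$\dim G=k+\dim H_x\leq \tfrac{k(k+1)}{2}+\tfrac{(n-k)(n-k-1)}{2},$$
whose right side is at most $\tfrac{n(n-1)}{2}$ for $k\in[0,n-1]$ (with equality only at $k=0$ and $k=n-1$). This contradicts $\dim G>\tfrac{n(n-1)}{2}+1$.

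So the action must be transitive and $M=G/H$ for a closed subgroup $H$. The isotropy representation embeds $H$ faithfully in $O(n)$, and the dimension hypothesis on $G$ translates (via $\dim H=\dim G-n$) to
$$\tfrac{(n-1)(n-2)}{2}<\dim H<\tfrac{n(n-1)}{2}.$$
I would finish by invoking the classical fact that for $n\neq 4$, every proper closed subgroup $H\subsetneq O(n)$ satisfies $\dim H\leq \tfrac{(n-1)(n-2)}{2}=\dim O(n-1)$, with equality realized (up to conjugacy) by the standard embedding of $O(n-1)$. This contradicts the displayed lower bound.

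The main obstacle is this last step: justifying the dimension gap among proper closed subgroups of $O(n)$. One route is the Borel--de Siebenthal classification of maximal connected subgroups of compact Lie groups; another is a case-by-case analysis in the style of the transformation-group literature (cf.\ \cite{kotrgps}). The exclusion of $n=4$ is essential: $U(2)\subset SO(4)$ has dimension $4>\dim O(3)=3$, and correspondingly $U(3)$ acts transitively and effectively on $\bbC P^2=U(3)/(U(2)\times U(1))$, with $\dim U(3)=9$ lying strictly inside the would-be gap $(7,10)$.
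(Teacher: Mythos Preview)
The paper does not give its own proof of this statement: Theorem~\ref{thm:wanggaps} is quoted from Wang \cite{wanggaps} without argument, so there is nothing to compare your proposal against on that front.

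That said, your sketch is sound and is in fact the standard line of proof. The non-transitive case is handled by your isotropy bound, and the convexity check showing $k(k+1)/2+(n-k)(n-k-1)/2\le n(n-1)/2$ on $[0,n-1]$ is correct. For the transitive case, the ``main obstacle'' you flag---the gap among proper closed subgroups of $O(n)$---is exactly the Montgomery--Samelson lemma that the paper records as Lemma~\ref{lem:montsamgap}. Since $H$ is compact its image under the isotropy representation is closed in $O(n)$, and $\dim H<\dim SO(n)$ forces $H\neq SO(n)$, so that lemma applies directly and gives the contradiction. So rather than appealing to Borel--de Siebenthal, you can simply cite Lemma~\ref{lem:montsamgap}.

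One small correction in your $n=4$ example: $U(3)$ does not act \emph{effectively} on $\bbC P^2$ (the scalar circle acts trivially). The effective group is $PU(3)$, of dimension $8$, which still lies in the interval $(7,10)$, so the point stands.
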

Therefore the next case to consider is $\dim G=\frac{n(n-1)}{2}+1$. The following characterization is independently due to Kuiper and Obata.
	\begin{thmnr}[Kuiper, Obata {\cite[II.3.3]{kotrgps}}] Let $M$ be a closed Riemannian $n$-manifold with $n>4$ and $G$ a connected compact group of dimension $\frac{n(n-1)}{2}+1$ acting smoothly, effectively, and isometrically on $M$. Then $M$ is isometric to $S^{n-1}\times S^1$ or $\bbR P^{n-1}\times S^1$ equipped with a product of a round metric on $S^{n-1}$ or $\bbR P^{n-1}$ and the standard metric on $S^1$. Further $G=SO(n)\times S^1$ or $PSO(n)\times S^1$.
	\label{thm:seccase}
	\end{thmnr}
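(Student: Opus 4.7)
The plan is to reduce to a transitive action on $M$ and then classify the resulting homogeneous space. First I would average the original Riemannian metric over the compact group $G$ so that, without loss of generality, $G$ acts by isometries; effectiveness and the dimension of $G$ are preserved.

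Next I would force the action to be transitive by orbit--isotropy dimension counting. For any $p\in M$ the isotropy $G_p$ acts faithfully and orthogonally on $T_pM$, and if $p$ lies on a principal orbit of dimension $k$, then the slice theorem together with the minimality of the principal orbit type forces $G_p$ to act trivially on the normal slice, so in fact $G_p\hookrightarrow\Or(k)$. Consequently
\[ \dim G \;=\; k+\dim G_p \;\leq\; k+\tfrac{k(k-1)}{2} \;=\; \tfrac{k(k+1)}{2}. \]
The hypothesis $\dim G=\tfrac{n(n-1)}{2}+1$ then forces $k=n$, so the action is transitive and $M=G/H$ with $\dim H=\tfrac{(n-1)(n-2)}{2}$.

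The third step is to identify the identity component $H^0$. Via the isotropy representation, $H^0$ is a compact connected Lie subgroup of $\SO(n)$ whose Lie algebra $\frh^0\subset\mathfrak{so}(n)$ has codimension exactly $n-1$. For $n>4$ the list of connected subalgebras of $\mathfrak{so}(n)$ of codimension $\leq n-1$ is very short: running through the maximal subalgebras of $\mathfrak{so}(n)$ (the $\mathfrak{so}(p)\oplus\mathfrak{so}(n-p)$ for $p<n$, the unitary and symplectic candidates $\mathfrak{u}(n/2)$, $\mathfrak{sp}(n/4)$ when applicable, and the exceptional small-dimensional embeddings), the only one of the correct dimension is $\frh^0=\mathfrak{so}(n-1)$ standardly embedded. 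The hypothesis $n>4$ is essential here to avoid accidental isomorphisms in low dimensions (e.g.\ $\mathfrak{so}(4)\cong\mathfrak{so}(3)\oplus\mathfrak{so}(3)$, and the symplectic embedding at $n=4$). This case analysis is the main obstacle of the proof.

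Finally I would determine $G$ and $M$. Since $\frh^0=\mathfrak{so}(n-1)$ is simple for $n\geq 6$ (a minor additional check settles $n=5$), it sits in a single simple ideal of the semisimple part of $\frg$. A dimension count on the decomposition $\frg=\mathfrak{z}(\frg)\oplus[\frg,\frg]$, combined with effectiveness of the $G$-action on $M$ (which rules out the degenerate possibility $\frg=\mathfrak{so}(n-1)\oplus\bbR^n$, since there $\SO(n-1)$ would act trivially on the resulting torus), forces $\frg\cong\mathfrak{so}(n)\oplus\bbR$. Hence $G=\SO(n)\times S^1$ or $\PSO(n)\times S^1$, and $M=S^{n-1}\times S^1$ or $\bbR P^{n-1}\times S^1$. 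Uniqueness of the $G$-invariant metric on $G/H$ up to a scaling on each factor (forced by $\SO(n-1)$-invariance of the inner product on $\frg/\frh$) yields the claimed product metric and completes the proof.
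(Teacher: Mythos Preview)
The paper does not give its own proof of this theorem: it is quoted as a classical result of Kuiper and Obata, with a citation to Kobayashi's book, and is used as a black box in the later argument. So there is no proof in the paper to compare your proposal against.

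That said, your outline is a reasonable sketch of how such a classification goes. The transitivity argument in your step~2 is correct: for a principal orbit the slice representation is trivial, so the principal isotropy embeds in $\Or(k)$ and the inequality $\dim G\le k(k+1)/2$ forces $k=n$. Your step~3 is essentially an invocation of the Montgomery--Samelson type gap (which the paper records as Lemma~\ref{lem:montsamgap}): for $n>4$ the only connected subgroup of $\SO(n)$ of dimension $\tfrac{(n-1)(n-2)}{2}$ is a standard $\SO(n-1)$.

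The step that would need the most work to make rigorous is your step~4. You argue that $\frh^0\cong\mathfrak{so}(n-1)$ lies in a single simple ideal $\frg_1$ of $\frg$; the dimension count does exclude a diagonal embedding into two factors, but you then need to pin down $\frg_1$ among all simple compact Lie algebras that contain $\mathfrak{so}(n-1)$ and have dimension at most $\tfrac{n(n-1)}{2}+1$. Asserting that $\frg_1=\mathfrak{so}(n)$ is the only possibility requires another pass through the classification (or a Dynkin-style subalgebra argument), not just a one-line dimension count; and the case $\frg_1=\mathfrak{so}(n-1)$ needs the effectiveness argument you indicate, but you should also rule out nonabelian complementary factors, not only $\bbR^n$. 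None of this is wrong in spirit, but as written it is more of a plan than a proof.
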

After Theorem \ref{thm:seccase}, the natural next case to consider is $\dim G=\frac{n(n-1)}{2}$. There is a complete classification due to Kobayashi-Nagano \cite{kobnag}. 
	\begin{thmnr}[Kobayashi-Nagano] Let $M$ be a closed Riemannian $n$-manifold with $n>5$ and $G$ a connected compact group of dimension $\frac{n(n-1)}{2}$ acting smoothly, effectively, and isometrically on $M$. Then $M$ must be one of the following.
		\begin{enumerate}
			\item $M$ is diffeomorphic to $S^n$ or $\bbR P^n$ and $G=SO(n)$ or $PSO(n)$. In this case $G$ has a fixed point on $M$. Every orbit is either a fixed point or has codimension 1. Regarding $S^n$ as the solution set of $\sum_{i=0}^n x_i^2 =1$ in $\bbR^{n+1}$, the metric on $M$ (or its double cover if $M$ is diffeomorphic to $\bbR P^n$) is of the form
				$$ds^2 = f(x_0) \sum_{i=0}^n dx_i^2$$
			for a smooth positive function $f$ on $[-1,1]$. 
			\item $M$ is diffeomorphic to a quotient $(L\times \bbR)\slash \Gamma$ where $L=S^{n-1}$ or $L=\bbR P^{n-1}$ and $G=\SO(n)$ or $\PSO(n)$. Further, we have $\Gamma\cong \bbZ$. If $L=S^{n-1}$, then $\Gamma$ is generated either by the map $(v,t)\mapsto (v,t+1)$ or by $(v,t)\mapsto (-v,t+1)$. If $L=\bbR P^{n-1}$, then $\Gamma$ is generated by the map $(x,t)\mapsto (x,t+1)$. In all cases the projection on the second coordinate $S^{n-1}\times \bbR\rightarrow \bbR$ descends to a map $M\rightarrow S^1$ that is a fiber bundle with fibers diffeomorphic to $L$. The $G$-action preserves the fibers of $M\rightarrow S^1$ and restricts to an orthogonal action on each fiber.
			\item $M$ is a quotient $(S^{n-1}\times \bbR)\slash\Gamma$ where $\Gamma$ is generated by
				\begin{align*}
					(v,t)\mapsto (v,t+2)\\
					(v,t)\mapsto (-v,-t).
				\end{align*}
			In this case $G=SO(n)$ acts on $S^{n-1}\times \bbR$ by acting orthogonally on each copy $S^{n-1}\times\{t\}$. This action commutes with the action of $\Gamma$, so that the $G$-action descends to $M$. We have $M\slash G=[0,1]$. The $G$-orbits lying over the endpoints 0,1 are isometric to round projective spaces $\bbR P^{n-1}$ and the $G$-orbits lying over points in $(0,1)$ are round spheres.
			\item If $n=6$ there is the additional case that $M\cong \bbC P^3$, equipped with the Fubini-Study metric and the standard action of $G=SO(6)\cong SU(4)\slash \{\pm \id\}$.
			\item If $n=7$ there are the additional cases $M\cong \textrm{Spin}(7)\slash G_2$ and $G=\textrm{Spin}(7)$, or $M\cong SO(7)\slash G_2$ and $G=SO(7)$. In this case $M$ is isometric to $S^7$ or $\bbR P^7$ with a constant curvature metric. 
			\end{enumerate}
		\label{thm:3case}
	\end{thmnr}
	\begin{rmknr} Actually Kobayashi-Nagano prove a more general result that includes the possibility that $M$ is noncompact, and there are more possibilities. Since we will not need the noncompact case, we have omitted these. In their formulation of Case (4), $M$ is a manifold 
	of complex dimension 3 with constant holomorphic sectional curvature, and $G$ is
	the largest connected group of holomorphic isometries. 
	
	Specializing to the compact case gives an explicit description of Case (4) as follows. Hawley \cite{holsec1} and Igusa \cite{holsec2} independently proved that a simply-connected complex $n$-manifold of constant holomorphic sectional curvature is isometric to either $\bbC^n, \bbB^n$ or $\bbC P^n$ (with standard metrics). Therefore in Case (4) we obtain that $M$ is isometric to $\bbC P^3$ (equipped with a scalar multiple of the Fubini-Study metric) and $G=SO(6)\cong SU(4)\slash\{\pm \id\}$.
	\label{rmk:holsec} \end{rmknr}
	\begin{rmknr} If $M$ admits the description in Case (2) above, and is in addition assumed to be orientable, it follows that the bundle $M\rightarrow S^1$ is trivial. In particular $M$ is diffeomorphic to $L\times S^1$. 
	
	To see this,  note that the only other case to consider is that $M=(S^{n-1}\times \bbR)\slash \Gamma$ where $\Gamma\cong \bbZ$ is generated by the map $(v,t)\mapsto (-v,t+1)$. This is a bundle with monodromy $-\id \in \text{Diff}(S^{n-1})$. Two bundles over $S^1$ are equivalent if and only if their monodromies are isotopic (i.e. belong to the same component of Diff$(S^{n-1})$. So let us check that $-\id$ is isotopic to the identity map: Indeed, because $M$ is orientable, the map $(v,t)\mapsto (-v,t+1)$ is orientation preserving on $S^{n-1}\times \bbR$. It follows that $n$ is even, so that $-\id\in \SO(n)$ and hence is clearly isotopic to the identity map.
	\label{rmk:orient}\end{rmknr} 
		
Theorem \ref{thm:3case} does not cover the case $n=5$. In the following proposition we resolve this case for semisimple groups. We would like to thank an anonymous referee for the following statement and its proof, which improve upon those contained in an earlier version of this paper.
	\begin{propnr} Let $M$ be a closed oriented Riemannian 5-manifold and suppose $G$ is a semisimple compact connected Lie group that acts on $M$ smoothly, effectively, and isometrically, and that $\dim(G)=10$. Then $M$ admits a description as in Cases (1), (2) or (3) of Theorem \ref{thm:3case}.\label{prop:3case}\end{propnr}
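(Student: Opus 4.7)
The plan is to analyze orbits of the $G=\SO(5)$-action on $M$. The first task is a Lie-algebraic classification of closed subgroups: I claim that up to conjugacy the only closed subgroups $H\subset\SO(5)$ with $\dim H\geq 5$ are $\SO(5)$, the standard $\SO(4)$ (stabilizer of a unit vector in $\bbR^5$), and $\Or(4)$ embedded via $h\mapsto\det(h)\oplus h$. For dimension $5$ the obstruction is that $\SO(5)$ has rank $2$: no $5$-torus fits, and the only other candidate compact Lie algebra of dimension $5$ is $\mathfrak{so}(3)\oplus\bbR^2$, which would require $\mathfrak{so}(3)$ to commute with a $2$-dimensional torus, contradicting the fact that the centralizer of a maximal torus in $\mathfrak{so}(5)$ is the torus itself. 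For dimension $6$, the identity component must be a $6$-dimensional compact connected subgroup of $\SO(5)$, which by inspection of the $5$-dimensional real representations of $\mathfrak{so}(4)$ is conjugate to the standard $\SO(4)$; the normalizer $N_{\SO(5)}(\SO(4))$ is the copy of $\Or(4)$ above, so the only closed $6$-dimensional subgroups are $\SO(4)$ and $\Or(4)$.

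It follows by orbit-stabilizer that every $G$-orbit in $M$ has dimension $0$ or $4$, and is diffeomorphic to a point, to $S^4=\SO(5)/\SO(4)$, or to $\bbR P^4=\SO(5)/\Or(4)$. So the action has cohomogeneity $1$, and $M/G$ is a compact $1$-manifold, either $S^1$ or $[0,1]$. By the general structure of cohomogeneity-$1$ actions, boundary points of $M/G$ correspond either to fixed points (with slice representation the standard $\SO(5)$-action on $\bbR^5$) or to exceptional $\bbR P^4$ orbits (with slice representation $\Or(4)$ acting on $\bbR$ via $\det$).

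Case analysis: if $M/G=S^1$, all orbits are principal, so $M$ fibers over $S^1$ with fiber $S^4$ or $\bbR P^4$, matching Case (2). If $M/G=[0,1]$, there are three subcases for the pair of endpoint orbits: (i) two fixed points, which assembles $M=S^5$ by gluing two standard $5$-disks along their common $S^4$ boundary, matching Case (1); (ii) one fixed point and one $\bbR P^4$ orbit, producing $M=\bbR P^5$ and again matching Case (1); (iii) two $\bbR P^4$ orbits, producing the cohomogeneity-$1$ manifold with group diagram $\Or(4)\supset\SO(4)\subset\Or(4)$, which is uniquely realized up to equivariant diffeomorphism as $(S^4\times\bbR)/\Gamma$ in Case (3).

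The main obstacle is the Lie-algebraic subgroup classification of $\SO(5)$ in dimensions $5$ and $6$; once that is in hand, identifying $M$ reduces to standard cohomogeneity-$1$ bookkeeping via the slice theorem.
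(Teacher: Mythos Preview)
Your argument is essentially correct but takes a genuinely different route from the paper. The paper does not redo the cohomogeneity-one classification; instead it observes that the hypothesis $n>5$ in the Kobayashi--Nagano proof of Theorem~\ref{thm:3case} is invoked only to exclude orbits of codimension $2$, and then applies the Montgomery--Samelson lemma (that $\Or(n)$ has no proper closed subgroup of dimension $>\tfrac{(n-1)(n-2)}{2}$ other than $\SO(n)$, for $n\neq 4$) to see that $\SO(5)$ has no closed subgroup of dimension $7$, hence no codimension-$2$ orbit. The remainder of the Kobayashi--Nagano argument then runs verbatim. This is considerably shorter, at the cost of relying on the internals of \cite{kobnag}.

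Your approach, by contrast, is self-contained: you classify the possible isotropy groups directly and rebuild $M$ from the group diagram via the slice theorem. Two remarks. First, your subgroup classification argues only dimensions $5$ and $6$ explicitly; you still owe a line ruling out closed subgroups of dimension $7$, $8$, or $9$. This is exactly what Montgomery--Samelson gives, or alternatively a one-line rank-$2$ enumeration of compact Lie algebras (the only candidates of dimension between $7$ and $9$ would be $\mathfrak{su}(3)$, which has no faithful $5$-dimensional real representation). Second, Cases (1)--(3) of Theorem~\ref{thm:3case} carry Riemannian content (e.g.\ the form of the metric in Case~(1)), which your equivariant-diffeomorphism argument does not touch; this follows from $\SO(n)$-invariance once the smooth picture is in place, and the paper's own proof likewise leaves it to \cite{kobnag}.
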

	\begin{proof} The proof of Theorem \ref{thm:3case} (see \cite[Section 3]{kobnag}) shows that the assumption that $n>5$ is only used to show that no $G$-orbit has codimension 2. We will show under the stated assumptions there are still no codimension 2 orbits, so that the rest of the proof of Theorem \ref{thm:3case} applies.
	
	Clearly we can assume that $G$ is connected. Note that $\dim(G)=\textrm{rank}(G)+2k$, where $k$ is the number of root spaces of $G$. Hence the rank of $G$ is even. Any semisimple Lie group with rank $\geq 4$ has dimension $>10$, so that we must have that rank$(G)=2$ and therefore $G$ is a quotient of $\textrm{Spin}(5)$. 
	
	Suppose now that $x\in M$ and that the orbit $G(x)$ has codimension 2 in $M$. Let $G_x$ be the stabilizer of $x$. Note that $G_x$ has rank either 1 or 2, and since the orbit of $x$ is codimension 2, we must have that $\dim G_x =7$. 
	
	If $G_x$ has rank 1, then it must be $S^1$ or $\textrm{Spin}(3)$ (possibly up to a finite quotient), but then we see that $\dim G_x<7$, so this is impossible.
	
	On the other hand if rank($G_x$)=2, then the dimension of $G_x$ is even, which is also a contradiction.\end{proof}

\section{Geometric characterization of the fibers of $\SO(M)\rightarrow M$}
\label{sec:main}

We will now start the proof of Theorem A.  In this section we aim to prove the following theorem, which proves Theorem A in all cases except for round spheres and surfaces. The remaining cases are resolved in Section \ref{sec:kcnst} (round spheres) and Section \ref{sec:surf} (surfaces).
	\begin{thmnr} Let $M,N$ be closed oriented connected Riemannian $n$-manifolds and fix $\lambda>0$. Equip $\SO(M)$ and $\SO(N)$ with Sasaki-Mok-O'Neill metrics using the metric $\langle \cdot, \cdot\rangle_\lambda$ on $\fro(n)$. Assume that $n\neq 2, 3,4,8$ and that $M$ does not have constant curvature $\frac{1}{2\sqrt{\lambda}}$. Then $M,N$ are isometric if and only if $\SO(M)$ and $\SO(N)$ are isometric.
	\label{thm:mainmost}
	\end{thmnr}
\begin{proof} Write $X:=SO(M)\cong SO(N)$, and let 
	\begin{eqnarray*}
		\pi_M: X\rightarrow M\\
		\pi_N: X\rightarrow N
	\end{eqnarray*}
be the natural projections. The strategy of the proof is to characterize the fibers of $\pi_M$ and $\pi_N$ just in terms of the geometry of $X$, except when $M$ is flat or Isom$(M)$ has dimension at least $\frac{1}{2}n(n-1)$. It automatically follows that in all but the exceptional cases the fibers of $\pi_M$ and $\pi_N$ must agree, and we will use this to show that $M$ and $N$ are isometric. Finally we will show that in the exceptional cases $M$ and $N$ also have to be isometric.

Note that the assumptions of Theorem \ref{thm:mainmost} guarantee that we can use Takagi-Yawata's computation of the Lie algebra of Killing fields on $X$, so we can write (see Corollary \ref{cor:tydecomp}):
	$$i(X)=i(M)\oplus i_V^M\oplus (\Lambda^2 M)_0.$$
Here, as before, $i(M)$ denotes the space of Killing fields on $M$, and $i_V^M$ consists of the Killing fields $A^\ast$ for $A\in\fro(n)$ (in particular $i_V^M\cong \fro(n)$), and $(\Lambda^2 M)_0$ denotes the space of parallel 2-forms on $M$.
On the other hand, the natural action of $\SO(n)$ on the fibers of $\pi_N$ induces an embedding of $\SO(n)$ in Isom($X$), hence an embedding of Lie algebras
	$$\fro(n)\cong i_V^N\hookrightarrow i(X)=i_V^M \oplus (\Lambda^2 M)_0 \oplus i(M).$$
We identify $i_V^N$ with its image throughout. Now consider the projections of $i_V^N$ onto each of the factors of this decomposition. We have the following cases:
	\begin{enumerate}
		\item $i_V^N = i_V^M$, or
		\item $i_V^N$ projects nontrivially to $(\Lambda^2 M)_0$, or
		\item $i_V^N$ projects trivially to $(\Lambda^2 M)_0$ but nontrivially to $i(M)$.
	\end{enumerate}
We will show below that these cases correspond to (1) the fibers of $\pi_M$ coincide with the fibers of $\pi_N$, (2) $M$ is flat, and (3) $\dim \textrm{Isom}(M)\geq \frac{1}{2}n(n-1)$. We will complete the proof of Theorem \ref{thm:mainmost} in each of these cases below.
\subsection*{Case 1 (vertical directions agree)} Assume that $i_V^N=i_V^M$. For any $x\in X$, the values of $i_V^M$ at $x$, i.e. the set of vectors
	$$\{Z(x)\mid Z\in i_V^M\},$$
span the tangent space to the fiber of $\pi_M$ through $x$. On the other hand this set also spans the tangent space to the fiber of $\pi_N$ through $x$. It follows that the fibers of $\pi_M$ and $\pi_N$ actually coincide. Hence we have a natural map $f:M\rightarrow N$ defined as follows: For $p\in M$, let $x\in \pi_M^{-1}(p)$ be any point in the fiber of $\pi_M$ over $p$. Then set $f(p):=\pi_N(x)$. The fact that the fibers of $\pi_M$ and $\pi_N$ coincide proves that $f(p)$ does not depend on the choice of $x$. 

We claim $f$ is an isometry. Denote by $\mathcal{H}^M$ and $\mathcal{V}^M$ the horizontal and vertical subbundles with respect to $\pi_M: X\rightarrow M$. Because $\pi_M$ is a Riemannian submersion, the metric on $T_x M$ coincides with the metric on the horizontal subbundle $\mathcal{H}_u^M$ at a point $u\in \pi_M^{-1}(x)$. We have 
	$$\mathcal{H}_u^M = (\mathcal{V}_u^M)^\perp=(\ker(\pi_M)_\ast)^\perp =(\ker (\pi_N)_\ast)^\perp.$$
Here the first identity is because by definition of the Sasaki-Mok-O'Neill metric on $X$, the horizontal and vertical subbundles are orthogonal. The last identity follows because we know the fibers of $\pi_M$ and $\pi_N$ agree. Finally, note that the space $(\ker (\pi_N)_\ast)^\perp$ is just the horizontal subbundle of $\pi_N:X\rightarrow N$. Since $\pi_N$ is a Riemannian submersion, we conclude that the metric on $\mathcal{H}_u^M$ coincides with the metric on $T_{\pi_N(u)}N$. This proves the naturally induced map
	$$f:M\rightarrow N$$
is a local isometry. Since $f$ is also injective, $M$ and $N$ are isometric.
\subsection*{Case 2 (many parallel forms)} Assume that $i_V^N\cong\fro(n)$ projects nontrivially to $(\Lambda^2 M)_0$. Note that the kernel of the projection of $i_V^N$ to $(\Lambda^2 M)_0$ is an ideal in $i_V^N$. On the other hand $i_V^N\cong \fro(n)$ is simple (because $n>4$), so the projection $i_V^N\rightarrow (\Lambda^2 M)_0$ must be an isomorphism onto its image. Therefore 
	\begin{equation}
	\dim (\Lambda^2 M)_0\geq \dim \mathfrak{o}(n)=\frac{n(n-1)}{2}.
	\label{eq:dim}
	\end{equation}
We claim that we actually have equality in Equation \ref{eq:dim}. To see this, note that since a parallel form is invariant under parallel transport, it is determined by its values on a single tangent space, so that we have an embedding 
	\begin{equation}(\Lambda^2 M)_0\hookrightarrow \Lambda^2 T_x M.\label{eq:parform}\end{equation}
Therefore $\dim (\Lambda^2 M)_0\leq \frac{n(n-1)}{2}$, and equality in Equation \ref{eq:dim} holds. Hence by a dimension count, the projection $i_V^N\rightarrow (\Lambda^2 M)_0$ is not only injective, but also surjective. 

So we have $\mathfrak{o}(n)\cong (\Lambda^2 M)_0$, and $M$ has the maximal amount of parallel forms it can possibly have (i.e. a space of dimension $\frac{n(n-1)}{2}$). Note that a torus is an example of such a manifold. Motivated by these examples, we claim that $M$ is a flat manifold. 

To prove that $M$ is flat, let us first show that for any $x\in M$, the holonomy group at $x$ is trivial. Recall that the holonomy group consists of linear maps $T_x M\rightarrow T_x M$ obtained by parallel transport along a loop in $M$ based at $x$. Therefore any holonomy map will fix parallel forms pointwise. Suppose now that $T: T_x M\rightarrow T_x M$ is a holonomy map at $x\in M$. We showed above that the evaluation at $x$ is an isomorphism $(\Lambda^2 M)_0\hookrightarrow \Lambda^2 T_x M$ (see Equation \ref{eq:parform}). Since $T$ fixes parallel forms, it is therefore clear that $\Lambda^2 T=\id$ (i.e. $T$ acts trivially on oriented planes in $T_x M$). Since $\dim(M)>2$, it follows that $T=\id$. 

So $M$ has trivial holonomy. Since the holonomy algebra (i.e. the Lie algebra of the holonomy group) contains the Lie algebra generated by curvature operators $R(v,w)$ where $v,w\in T_x M$ (see e.g. \cite[Section 8.4]{petersen}), it follows that $R(v,w)=0$ for all $v,w\in T_x M$, so $M$ is flat.

We will use that $M$ is flat to obtain more information about the Killing fields $i(M)$ of $M$. Recall that the structure of flat manifolds is described by the Bieberbach theorems. Namely, any closed flat manifold is of the form $\bbR^n \slash \Gamma$ for some discrete torsion-free subgroup $\Gamma\subseteq \isom(\bbR^n)$, and there is a finite index normal subgroup $\Lambda\subseteq \Gamma$ that consists of translations of $\bbR^n$ (so $\bbR^n\slash \Lambda$ is a torus). In particular the Killing fields on $\bbR^n\slash \Lambda$ are just obtained by translations of $\bbR^n$, so $i(\bbR^n\slash \Lambda)\cong\bbR^n$ as a Lie algebra.

The Killing fields on $M=\bbR^n\slash\Gamma$ are exactly those Killing fields of $\bbR^n\slash\Lambda$ invariant under the deck group $\Gamma\slash\Lambda$ of the (regular) cover $\bbR^n\slash\Lambda\rightarrow M$. In particular $i(M)$ is a sub-Lie algebra of $\bbR^n$.

Therefore $i(M)$ is abelian. Recall that we have
	$$i(X)\cong i_V^M\oplus (\Lambda^2 M)_0 \oplus i(M)$$
We know that $i_V^N\cong \mathfrak{o}(n)$ has no abelian quotients, so we must have $i_V^N\subseteq i_V^M\oplus (\Lambda^2 M)_0$. Hence for any $x\in N$ and $\widetilde{x}\in \pi_N^{-1}(x)$, we have
	\begin{align*}
	T_{\widetilde{x}} \pi_N^{-1}(x)&=i_V^N|_{\widetilde{x}}\\
	&\subseteq (i_V^M\oplus (\Lambda^2 M)_0)|_{\widetilde{x}}\\
	&\subseteq T_{\widetilde{x}}\pi_M^{-1}(\pi_M(\widetilde{x})),\end{align*}
where the last inclusion holds because the vector fields in $i_V^M\oplus (\Lambda^2 M)_0$ are vertical with respect to $\pi_M$ (see Section \ref{sec:taya}). Since $\pi_N^{-1}(x)$ and $\pi_M^{-1}(\pi_M(\widetilde{x}))$ are connected submanifolds with the same dimension, we must have $\pi_N^{-1}(x)=\pi_M^{-1}(\pi_M(\widetilde{x}))$. Therefore the fibers of $\pi_M$ and $\pi_N$ agree. We conclude that $M$ and $N$ are isometric in the same way as Case 1.
	
%
%
%
%

\subsection*{Case 3 (many Killing fields)} Assume $i_V^N$ projects nontrivially to $i(M)$. Again we use that $\mathfrak{o}(n)$ is a simple Lie algebra because we have $n>4$. By assumption $i_V^N\cong \mathfrak{o}(n)$ projects nontrivially to $i(M)$, hence $i_V^N$ projects isomorphically to $i(M)$. Let $\frh$ be the image of $i_V^N$ in $i(M)$. At this point we would like to say that $i_V^N\subseteq i(M)$. We cannot in general establish this, but we have the following.
	\begin{claimnr} Assume that $\mathfrak{o}(n)\nsubseteq(\Lambda^2 M)_0$ and that $\mathfrak{o}(n)\nsubseteq (\Lambda^2 N)_0$. Then
	\begin{enumerate}
		\item $i_V^N\subseteq i(M)$, and
		\item $i_V^M\subseteq i(N)$.
	\end{enumerate}
	Therefore $M$ and $N$ have isometry groups of dimension $\geq \frac{1}{2}n(n-1)$.
	\label{cl:swap}
	\end{claimnr}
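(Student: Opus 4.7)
The plan is to exploit the fact that $X := F(M) \cong F(N)$ carries \emph{two} Takagi-Yawata decompositions, one coming from each bundle structure:
\begin{align*}
i(X) &= ((\Lambda^2 M)_0 \rtimes i(M)) \oplus i_V^M \\
     &= ((\Lambda^2 N)_0 \rtimes i(N)) \oplus i_V^N.
\end{align*}
In particular $i_V^N$ is a simple Lie ideal of $i(X)$ (using $n \geq 5$ so that $\mathfrak{o}(n)$ is simple). Write $N_M := (\Lambda^2 M)_0$ for brevity. I will pin down the location of $i_V^N$ inside the $M$-decomposition in three steps.

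First, both $i_V^M$ and $i_V^N$ are simple ideals of $i(X)$, so either they coincide or they commute. In Case 3, $i_V^N$ projects nontrivially onto $i(M)$, whereas $i_V^M$ projects trivially to the summand $N_M \rtimes i(M)$; hence $i_V^N \neq i_V^M$. Thus $i_V^N$ lies in the centralizer of $i_V^M$ in $i(X)$, which (since $\mathfrak{o}(n)$ has trivial center for $n \geq 5$) is exactly $N_M \rtimes i(M)$.

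Second, the hypothesis $\mathfrak{o}(n) \nsubseteq N_M$ gives $i_V^N \cap N_M = 0$, since this intersection is an ideal of the simple algebra $i_V^N$ and cannot equal $i_V^N$. Because $N_M$ is an ideal of $i(X)$ (it is an ideal in $N_M \rtimes i(M)$ and the $i_V^M$-summand commutes with it), the bracket $[N_M, i_V^N]$ lies in both $N_M$ and $i_V^N$, hence is zero. So $N_M$ centralizes $i_V^N$.

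Third, each element of $i_V^N$ has a unique expression $\sigma(h) + h$ with $h \in \frh$ and $\sigma(h) \in N_M$, since the projection $i_V^N \to \frh$ is an isomorphism. Centralizing $N_M$ forces $\frh$ to act trivially on $N_M$, and applying the subalgebra condition to two such elements yields the cocycle identity
$$\sigma([h_1, h_2]) = h_1 \cdot \sigma(h_2) - h_2 \cdot \sigma(h_1) = 0.$$
Simplicity gives $[\frh, \frh] = \frh$, so $\sigma \equiv 0$ and $i_V^N = \frh \subseteq i(M)$, proving (1). Assertion (2) follows by swapping the roles of $M$ and $N$. The crux of the argument is recognizing that the hypothesis $\mathfrak{o}(n) \nsubseteq (\Lambda^2 M)_0$ is precisely the input needed to rule out a nontrivial coboundary twist; everything else is Lie-algebraic bookkeeping with the two direct-sum decompositions.
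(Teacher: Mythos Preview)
Your strategy---using that $i_V^N$ is a simple \emph{ideal} of $i(X)$ (from the $N$-decomposition) to locate it directly inside the $M$-decomposition---is different from the paper's, which instead projects the pair $\frh\oplus i_V^M$ onto the two summands of the $N$-decomposition and argues by dimension counts. Your route is more structural, but there is a genuine gap in Step~3.

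You assert that ``centralizing $N_M$ forces $\frh$ to act trivially on $N_M$''. Unpacking this: for $n\in N_M$ and $\sigma(h)+h\in i_V^N$, the relation $[n,\sigma(h)+h]=0$ gives only $[h,n]=[\sigma(h),n]$, so $\frh$ acts trivially on $N_M$ precisely when $\sigma(\frh)$ is central in $N_M$. That would follow if $N_M=(\Lambda^2 M)_0$ were abelian, but it is \emph{not} abelian in general: for flat $M$ one has $(\Lambda^2 M)_0\cong\mathfrak{o}(n)$ as Lie algebras, and indeed the paper's Case~2 (treating the possibility $\mathfrak{o}(n)\subseteq(\Lambda^2 M)_0$) would be vacuous otherwise. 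For the same reason your displayed ``cocycle identity'' is missing the term $[\sigma(h_1),\sigma(h_2)]$.

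The gap can be repaired inside your framework. Since $i_V^N$ is an \emph{ideal} (not merely a subalgebra), bracketing with $k\in i(M)$ gives $\sigma([k,h])=[k,\sigma(h)]$. Combining this with $[h,n]=[\sigma(h),n]$ (applied with $n=\sigma(h')$) yields
\[
\sigma([h,h'])=[h,\sigma(h')]=[\sigma(h),\sigma(h')]\cdot(-1)^{?}\,,
\]
more precisely $\sigma([h,h'])=-[\sigma(h),\sigma(h')]$, so $-\sigma:\frh\to N_M$ is a Lie algebra homomorphism. If $\sigma\neq 0$ then, by simplicity of $\frh\cong\mathfrak{o}(n)$, its image is a copy of $\mathfrak{o}(n)$ inside $N_M$, contradicting the hypothesis $\mathfrak{o}(n)\nsubseteq(\Lambda^2 M)_0$. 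Hence $\sigma=0$ and $i_V^N=\frh\subseteq i(M)$. For~(2), before ``swapping'' you should also verify that $i_V^M$ projects nontrivially to $i(N)$; this follows because $i_V^M$ and $i_V^N$ are distinct simple ideals, so $i_V^M\subseteq(\Lambda^2 N)_0\rtimes i(N)$, and a trivial projection to $i(N)$ would force $\mathfrak{o}(n)\subseteq(\Lambda^2 N)_0$.
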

\begin{proof}Note that $i_V^M$ and $\frh$ centralize each other and are isomorphic to $\mathfrak{o}(n)$. Consider the projection
		$$p_1:\frh\oplus i_V^M\subseteq i(X)\cong i_V^N\oplus (\Lambda^2 N)_0 \oplus i(N)\longrightarrow i_V^N.$$
Note that $\dim(\frh\oplus i_V^M)=2\dim i_V^N$, so $p_1$ cannot be injective. If $p_1$ is trivial, then we have 
	$$\frh\oplus i_V^M\subseteq (\Lambda^2 N)_0\oplus i(N).$$
Using again that $\mathfrak{o}(n)$ is simple, and since $(\Lambda^2 N)_0$ does not contain a copy of $\mathfrak{o}(n)$ by assumption, we must have that $\frh\oplus i_V^M$ projects isomorphically to $i(N)$. However note that $\dim i(N)\leq \frac{n(n+1)}{2}$ by Theorem \ref{thm:maxisom}. Again by comparing dimensions we see that this is impossible. Therefore $\ker p_1$ is a proper ideal of $\frh\oplus i_V^M$, so $\ker p_1$ is either $\frh$ or $i_V^M$. 

Now consider the projection
	$$p_2: \frh\oplus i_V^M\subseteq i(X)\cong i_V^N\oplus (\Lambda^2 N)_0\oplus i(N)\rightarrow i(N).$$
As above we see that $p_2$ can be neither injective nor trivial. Hence we have that $\ker p_2$ is either $\frh$ or $i_V^M$.

If $\ker p_2=i_V^M$, then we have $i_V^M=i_V^N$, but this contradicts the assumption that $i_V^N$ projects nontrivially to $i(M)$. Therefore we must have $\ker p_1=i_V^M$ and $\ker p_2 =\frh$. The latter implies $i_V^N=\frh$, which proves (1).

Since $\ker p_1=i_V^M$, we have $i_V^M\subseteq (\Lambda^2 N)_0\oplus i(N)$ and $i_V^M$ projects trivially to $(\Lambda^2 N)_0$. Therefore we have $i_V^M\subseteq i(N)$, which proves (2).
	\end{proof}
	
If $\mathfrak{o}(n)\subseteq (\Lambda^2 M)_0$ or $\mathfrak{o}(n)\subseteq (\Lambda^2 N)_0$, the proof is finished in Case 2. Therefore we assume $i_V^N\subseteq i(M)$ and $i_V^M\subseteq i(N)$. Write $H_M:=\exp(i_V^N)$ and $H_N:=\exp(i_V^M)$ where $\exp$ is the exponential map on the Lie group Isom$(X)$. Then $H_M$ and $H_N$ are subgroups of Isom$(X)$, each isomorphic to $\SO(n)$, and $M=X\slash H_N$ and $N=X\slash H_M$.

Since $H_M$ and $H_N$ are commuting subgroups of $\isom(X)$, the action of $H_M$ on $X$ descends to an action on $M=X\slash H_N$ with kernel $H_M\cap H_N$. We will write $\overline{H}_M:=H_M\slash (H_M\cap H_N)$ for the group of isometries of $M$ thus obtained. Similarly, $H_N$ acts by isometries on $N=X\slash H_M$ with kernel $H_M\cap H_N$, and we will write $\overline{H}_N:=H_N\slash (H_M\cap H_N)$ for this group of isometries.

Note that $H_M\cap H_N$ is discrete, since its Lie algebra is $i_V^M\cap i_V^N=0$. In particular, since $H_M$ and $H_N$ are compact, it follows that $H_M\cap H_N$ is finite. Therefore the natural quotient map $H_M\rightarrow \overline{H}_M$ is a covering of finite degree, and  $\overline{H}_M$ and $H_M$ have the same Lie algebra. Similarly, $\overline{H}_N$ and $H_N$ have the same Lie algebra. Therefore $\overline{H}_M$ and $\overline{H}_N$ are groups of isometries of closed $n$-manifolds with Lie algebras isomorphic to $\mathfrak{o}(n)$. The results of Section \ref{sec:act} exactly apply to such actions; these results will restrict the possibilities for $M$ and $N$ tremendously, as we will see below. 

Motivated by the results of Section \ref{sec:act}, we will now consider two cases: Either one of $\overline{H}_M$ or $\overline{H}_N$ acts transitively, or neither acts transitively.

\subsection*{Case 3(a) ($\overline{H}_M$ or $\overline{H}_N$ acts transitively)} Suppose $\overline{H}_M$ acts transitively on $M$. Since $\overline{H}_M$ has Lie algebra $\mathfrak{o}(n)$ and $\dim M=n$, Theorem \ref{thm:3case} and Proposition \ref{prop:3case} give a classification of the possibilities for $M$ and $\overline{H}_M$. Since in Cases (1), (2), and (3) of Theorem \ref{thm:3case} the group of isometries is not transitive, but by assumption $\overline{H}_M$ acts transitively on $M$, we know that either
	\begin{itemize}
		\item $M$ is isometric to $S^7\cong \textrm{Spin}(7)\slash G_2$, equipped with a constant curvature metric, and $\overline{H}_M=\textrm{Spin}(7)$, or
		\item $M$ is isometric to $\bbR P^7\cong \SO(7)\slash G_2$, equipped with a constant curvature metric, and $\overline{H}_M=\SO(7)$, or
		\item $M$ is isometric to $\bbC P^3$, equipped with a metric of constant holomorphic sectional curvature, and $\overline{H}_M=\SO(6)\cong \SU(4)\slash\{\pm\id\}$.
	\end{itemize}
We will show that the first case is impossible, and that in the other cases $M$ and $N$ are isometric.

\begin{lemnr} $M$ is not isometric to $S^7$.\end{lemnr}
\begin{proof} Since $\overline{H}_M=H_M\slash (H_M\cap H_N)$, we know that $\overline{H}_M$ is a quotient of $H_M\cong \SO(7)$. In particular, $H_M$ is not simply-connected. On the other hand, $\textrm{Spin}(7)$ is simply-connected. This is a contradiction. \end{proof}

\begin{lemnr} If $M$ is isometric to $\bbR P^7$, then $M$ and $N$ are isometric.\end{lemnr}
\begin{proof} Suppose now $M$ is isometric to $\bbR P^7$, and consider the action of $H_N$ on $N$. From the classification in Theorem \ref{thm:3case} and Remark \ref{rmk:orient}, and using that $\dim(N)=\dim(M)=7$, we see that $N$ must be diffeomorphic to one of the following:
	\begin{enumerate}
		\item $\bbR P^7$,
		\item $S^7$,
		\item $L_N\times  S^1$ where $L_N$ is $S^{6}$ or $\bbR P^{6}$, or
		\item $(S^6\times \bbR)\slash \Gamma$ where $\Gamma\cong D_\infty$ is generated by $(v,t)\mapsto (-v,-t)$ and $(v,t)\mapsto (v,t+2)$.
	\end{enumerate}
\begin{claimnr} We must have that $N$ is diffeomorphic to $\bbR P^7$ (and hence to $M$).\end{claimnr}
\begin{proof} We will show that we can distinguish the frame bundles of the manifolds appearing in Cases (2), (3), and (4) from $\SO(\bbR P^7)$ by their fundamental group.

First, let us compute the fundamental group of $\SO(N)=\SO(\bbR P^7)$. Note that $\SO(S^7)\cong SO(8)$ (see Example \ref{ex:nonpres}). It easily follows that $\SO(\bbR P^7)\cong \SO(8)\slash \{\pm \id\}$. In particular, $\pi_1 \SO(\bbR P^7)$ is obtained as an extension
	$$1\rightarrow \pi_1 \SO(8) \rightarrow \pi_1 \SO(\bbR P^7)\rightarrow \{\pm \id\}\rightarrow 1.$$
So $\pi_1(\SO(\bbR P^7))$ has order 4. So let us now show that in each of the Cases (2), (3), and (4), $\pi_1$ does not have order 4. 
	\begin{itemize}
	\item In Case (2), note that $\pi_1 \SO(S^7)=\pi_1 \SO(8)\cong \bbZ\slash (2\bbZ)$ has order 2. 
	\item In Case (3), $\pi_1 N$ is infinite. By the long exact sequence on homotopy groups for the fiber bundle $\SO(7)\rightarrow \SO(N)\rightarrow N$, we see that $\pi_1 \SO(N)$ surjects onto $\pi_1 N$. Therefore $\pi_1 \SO(N)$ is also infinite.
	\item In Case (4), $\pi_1 N\cong D_\infty$ is infinite as well. The above argument for Case (3) shows that $\pi_1 \SO(N)$ is infinite as well.
	\end{itemize}
The only remaining possibility is that $N$ is diffeomorphic to $\bbR P^7$ (and hence also to $M$). \end{proof}
So we find that $N$ is diffeomorphic to $\bbR P^7$. We will now determine the metric on $N$:
	\begin{claimnr} $N$ has constant curvature.\end{claimnr}
\begin{proof} Theorem \ref{thm:3case} classifies the possible metrics on $N$. Namely, if $\overline{H}_N$ acts transitively on $N$, then $N$ has constant curvature, as desired.

Suppose now that $H_N$ does not act transitively on $N$. Identify the universal cover $\widetilde{N}$ of $N$ (which is diffeomorphic to $S^7$) with the
solution set of $\sum_{i=0}^7 x_i^2=1$ in $\bbR^{8}$. Then by Theorem \ref{thm:3case}.(3) the metric on $\widetilde{N}$ is of the form
	$$ds_{\widetilde{N}}^2=f(x_0) \sum_{i=0}^7 dx_i^2$$
for some smooth positive function $f$ on $[-1,1]$. The function $|x_0|$ descends from $\widetilde{N}$ to $N$, and $\overline{H}_N$ acts isometrically and transitively on each level set
	$$\left\{[x_0,\dots,x_7]\in \bbR P^7 \;\middle|\; \sum_{i=1}^7 x_i^2 = 1 - c^2\right\}$$
for $0\leq c\leq 1$. For $c=0$ this level set is a copy of $\bbR P^6$ (the image of the equator $S^6\subseteq S^7\cong \widetilde{N}$ in $\bbR P^7\cong N$) and for $c=1$ the level set consists of a single point (the image of the north and south pole). For $0<c<1$, the level set is a copy of $S^6$.

Let $x\in N$ be any point with $0<x_0<1$, so that the $\overline{H}_N$-orbit of $x$ is a copy of $S^6$. Since the metric on $\overline{H}_N x$ is given by $f(x_0)\sum_i dx_i^2$, we have
	$$\vol(\overline{H}_N x)=(f(x_0))^{\frac{n}{2}}\vol(S^6)$$ 
where on the right-hand side $\vol(S^6)$ is computed with respect to the standard metric $\sum_i dx_i^2$. Now consider the fiber bundle $\pi_N: \SO(N)\rightarrow N$. Recall that each fiber in $\SO(N)$ has a fixed volume $\nu>0$, and is an $H_M$-orbit. Therefore for $e\in \pi_N^{-1}(x)$, we have
	\begin{equation} \vol(H_M H_N e)=\nu \vol(\overline{H}_N x)=\nu (f(x_0))^{\frac{6}{2}} \vol(S^6).\label{eq:volfiber}\end{equation}
On the other hand, $e$ is a frame at some point $y\in M$. Since the fibers of $\SO(M)\rightarrow M$ also have volume $\nu$, it follows that
	$$\vol(H_M H_N e)=\nu \vol(\overline{H}_M y).$$
Since $\overline{H}_M$ acts transitively on $M$, the right-hand side is just equal to $\nu\vol(M)$. In particular, the left-hand side does not depend on $e$. Using Equation \ref{eq:volfiber}, we see that $f(x_0)$ does not depend on the point $x$ chosen. Since the only requirements for $x$ were that $-1<x_0<1$ and $x_0\neq 0$, we see that $f$ is constant on $(-1,1)\backslash \{0\}$. Since $f$ is also continuous, it is in fact constant on $[-1,1]$, so the metric on $\widetilde{N}$ is given by 
	$$ds_{\widetilde{N}}^2 = c \sum_{i=0}^7 dx_i^2$$
for some $c>0$. Therefore the metric is some multiple of the standard round metric, so $N$ has constant curvature.\end{proof}

So we have shown that both $M$ and $N$ are diffeomorphic to $\bbR P^7$ with constant curvature metrics. Since by Lemma \ref{lem:vol}, we also have that $\vol(M)=\vol(N)$, it follows that $M$ and $N$ have the same curvature, so that they are isometric, as desired. \end{proof}

\begin{lemnr} If $M$ is isometric to $\bbC P^3$, equipped with a metric of constant holomorphic sectional curvature, then $M$ and $N$ are isometric.\end{lemnr}
\begin{proof} Again consider the action of $\overline{H}_N$ on $N$. From the classification in Theorem \ref{thm:3case}, and using that $\dim(N)=\dim(M)=6$, we see that $N$ must be one of the following:
	\begin{enumerate}
		\item diffeomorphic to $S^6$ or $\bbR P^6$, 
		\item diffeomorphic to $L_N\times S^1$ where $L_N$ is $S^{5}$ or $\bbR P^{5}$,
		\item diffeomorphic to $(S^5\times \bbR)\slash \Gamma$ where $\Gamma\cong D_\infty$ is generated by $(v,t)\mapsto (-v,-t)$ and $(v,t)\mapsto (v,t+2)$, or
		\item isometric to $\bbC P^3$ with a metric of constant holomorphic sectional curvature.
	\end{enumerate}
We can rule out Cases (1), (2), and (3) by computations of $\pi_2$. Namely, let us first compute $\pi_2(\SO(\bbC P^3))$. The long exact sequence on homotopy groups of the fibration $\SO(6)\rightarrow \SO(\bbC P^3)\rightarrow \bbC P^3$ gives
	$$1=\pi_2 SO(6)\rightarrow \pi_2(\SO(\bbC P^3))\rightarrow \pi_2(\bbC P^3)\rightarrow \pi_1(SO(6))=\bbZ\slash(2\bbZ).$$
Since $\pi_2(\bbC P^3)\cong \bbZ$ it follows that $\pi_2(\SO(\bbC P^3))\cong \bbZ$. On the other hand, in Case (1), we have $\pi_2(\SO(S^6))=\pi_2(SO(7))=1$ and similarly $\pi_2(\SO(\bbR P^6))=1$. In Case (2), we have that $\pi_2 N \cong \pi_2 L_N$ since $S^1$ is aspherical. Since $L_N$ is diffeomorphic to either $S^5$ or $\bbR P^5$, we have $\pi_2 L_N=1$. Again by the long exact sequence on homotopy groups for the fibration $\SO(N)\rightarrow N$, we see that $\pi_2 \SO(N)=1$. Finally in Case (3) we have $\pi_2 N =\pi_2 S^5=1$. As in Case (2) we have that $\pi_2 \SO(N)=1$.

Therefore in Cases (1), (2), and (3), we cannot have $\SO(N)\cong \SO(\bbC P^3)$, so we conclude that $M$ and $N$ are both isometric to $\bbC P^3$ with a metric of constant holomorphic sectional curvature.

A metric of constant holomorphic sectional curvature on $\bbC P^3$ is determined by a bi-invariant metric on $SU(4)$, which is then induced on the quotient $SU(4)\slash S(U(1)\times U(3))\cong  \bbC P^3$. Hence the metrics on $M$ and $N$ differ only by scaling, so $M$ and $N$ are isometric if and only if $\vol(M)=\vol(N)$. By Lemma \ref{lem:vol} we indeed have $\vol(M)=\vol(N)$ so $M$ and $N$ are isometric.\end{proof}

Above we assumed that $\overline{H}_M$ acts transitively on $M$. If instead $\overline{H}_N$ acts transitively on $N$, the same proof applies verbatim.
%
\subsection*{Case 3(b) ($H_M$ and $H_N$ do not act transitively)} Theorem \ref{thm:3case} and Proposition \ref{prop:3case} imply that $M$ and $N$ are of one of the following types:
		\begin{enumerate}
			\item diffeomorphic to $S^n$ or $\bbR P^n$ equipped with a metric as in Theorem \ref{thm:3case}.(1),
			\item $L\times S^1$ where each copy $L\times \{z\}$ is an isometrically embedded round sphere or projective space, or
			\item $(S^{n-1}\times\bbR)\slash\Gamma$ where $\Gamma\cong D_\infty$ is generated by $(v,t)\mapsto (v,t+2)$ and $(v,t)\mapsto (-v, -t).$
		\end{enumerate}
\begin{claimnr} $M$ and $N$ belong to the same types in the above classification.\end{claimnr}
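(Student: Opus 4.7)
The plan is to distinguish among the three types by computing purely topological invariants of $X = F(M) \cong F(N)$, specifically $\pi_1(X)$ and $H^1(X;\bbZ)$. Since the isometry $F(M) \cong F(N)$ is in particular a diffeomorphism, both invariants are intrinsic to $X$ and can be matched against either side.

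First I would show that Type (1) is detected by $\pi_1(X)$ being finite. The long exact sequence for the fibration $\SO(n) \to X \to M$ gives a surjection $\pi_1(X) \twoheadrightarrow \pi_1(M)$ with kernel a quotient of $\pi_1(\SO(n)) = \bbZ/2\bbZ$, so $\pi_1(X)$ is finite iff $\pi_1(M)$ is finite. In Type (1), $M$ is diffeomorphic to $S^n$ or $\bbR P^n$, so $\pi_1(M)$ is finite. In Type (2), the bundle projection $M \to S^1$ induces a surjection $\pi_1(M) \twoheadrightarrow \bbZ$. In Type (3), $\Gamma \cong D_\infty$ acts freely on the simply connected space $S^{n-1}\times\bbR$ (simple connectivity uses $n \geq 5$, which holds under our standing assumption), so $\pi_1(M) = D_\infty$. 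Both Types (2) and (3) thus yield infinite $\pi_1(M)$. Applying the same reasoning to $N$, one concludes $M$ is Type (1) iff $N$ is Type (1).

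Second, to distinguish Types (2) and (3), I would compute $H^1(X;\bbZ)$ via the Serre spectral sequence for $\SO(n) \to X \to M$. Since the structure group $\SO(n)$ is connected, the monodromy acts trivially on $H^*(\SO(n);\bbZ)$, so the local coefficients are trivial. For $n \geq 3$ we have $H^1(\SO(n);\bbZ) = 0$, and no nontrivial differentials can enter or leave $E_2^{1,0}$, so $H^1(X;\bbZ) \cong H^1(M;\bbZ)$. For Type (2), the bundle projection $M \to S^1$ pulls back a generator of $H^1(S^1;\bbZ)$ to a class of infinite order; since the fiber $L_M$ has finite $\pi_1$, in fact $H^1(M;\bbZ) = \bbZ$. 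For Type (3), the abelianization of $\pi_1(M) = D_\infty$ is $\bbZ/2\bbZ \oplus \bbZ/2\bbZ$, so $H^1(M;\bbZ) = \mathrm{Hom}(\pi_1(M),\bbZ) = 0$. Hence $H^1(X;\bbZ) \neq 0$ iff $M$ is Type (2), and the same dichotomy holds for $N$.

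The main obstacle is essentially bookkeeping: verifying the triviality of the local coefficient system and the absence of obstructing differentials in low degree, which are standard for principal bundles with connected structure group, and confirming the $\pi_1$-computations in Type (3) (which depend only on $n \geq 3$). Once these are in hand, the pair $(\,|\pi_1(X)|<\infty\,,\;H^1(X;\bbZ)\neq 0\,)$ picks out exactly one of the three types, so $M$ and $N$ must land in the same class.
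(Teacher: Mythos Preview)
Your argument is correct and follows the same overall strategy as the paper: both proofs feed the fibration $\SO(n)\to X\to M$ into the long exact sequence of homotopy groups and then read off a purely topological invariant of $X$ that separates the three types. The paper does everything with $\pi_1(X)$ alone---finite versus infinite to isolate type~(1), and then the order of a maximal finite subgroup of $\pi_1(X)$ to separate types~(2) and~(3). You make the same first cut, but for the second you pass to $H^1(X;\bbZ)\cong\mathrm{Hom}(\pi_1(X),\bbZ)$, which vanishes for $D_\infty$ and is $\bbZ$ for the bundle-over-$S^1$ case. Your route has the mild advantage that it sidesteps any analysis of torsion in the extension $1\to\bbZ/2\bbZ\to\pi_1(X)\to\pi_1(M)\to 1$ (which in the paper's version requires knowing the finite subgroups of $\pi_1(M)$ in type~(2)); on the other hand, the Serre spectral sequence is heavier machinery than needed, since $H^1(X;\bbZ)\cong H^1(M;\bbZ)$ already follows directly from the fact that $\ker(\pi_1(X)\to\pi_1(M))$ is finite.
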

\begin{proof} Again we will show that the different types can be distinguished by the fundamental group of the frame bundle. Since $\SO(M)=\SO(N)$, it must then follow that $M$ and $N$ belong to the same type.

The fundamental group of $X=\SO(M)$ can be computed using the long exact sequence on homotopy groups for the fiber bundle $X\rightarrow M$ (or $X\rightarrow N$). Namely, we have
	$$\pi_2(M)\rightarrow \pi_1(SO(n))\rightarrow \pi_1(X)\rightarrow \pi_1(M)\rightarrow 1$$
and likewise for $N$. Since $\pi_2(M)=\pi_2(N)=1$ for all of the above types, we have a short exact sequence
	$$1\rightarrow \bbZ\slash 2\bbZ\rightarrow \pi_1(X)\rightarrow \pi_1(M)\rightarrow 1$$
and likewise for $N$. We see that $\pi_1(X)\cong\bbZ\slash 2\bbZ$ precisely when $M$ is diffeomorphic to $S^n$, and $\pi_1(X)$ has order 4 precisely when $M$ is diffeomorphic to $\bbR P^n$. If $\pi_1(X)$ is infinite then $M$ is of type (2) or (3). If the maximal finite subgroup of $\pi_1(X)$ has order 2 then $M$ is of type (2), and if the maximal finite subgroup of $\pi_1(X)$ has order 4 then $M$ is of type (3). Therefore we can distinguish all the possible cases by considering $\pi_1(X)$, so $M$ and $N$ are of the same type.\end{proof}

We will now show that in each of these cases, $M$ and $N$ are isometric.

\subsection*{Case A ($M$ and $N$ are diffeomorphic to $S^n$ or $\bbR P^n$)} Identify $S^n$ with the solution set of $\sum_{i=0}^{n} x_i^2 = 1$ in $\bbR^{n+1}$. By Theorem \ref{thm:3case}.(1), the metric on $M$ (or its double cover if $M$ is diffeomorphic to $\bbR P^n$) is of the form
	\begin{equation}
	ds_M^2 = f_M(x_0) \sum_{i=0}^n dx_i^2.
	\label{eq:metm}
	\end{equation}
Similarly the metric on $N$ (or its double cover) can be written as
		\begin{equation}
		ds_N^2 = f_N(x_0) \sum_{i=0}^n dx_i^2
		\label{eq:metn}
		\end{equation}
We will now show that $f_M(x)=f_N(x)$ for all $x$. We will just do this in case $M$ and $N$ are diffeomorphic to $S^n$, since the proof for $\bbR P^n$ is similar (note that it is not possible that one of $M$ and $N$ is diffeomorphic to $S^n$, and the other to $\bbR P^n$, since $\SO(S^n)$ and $\SO(\bbR P^n)$ are not diffeomorphic). Theorem \ref{thm:3case}.(1) also describes the action of $\overline{H}_N$ on $N$. Namely, $\overline{H}_N$ leaves the coordinate $x_0$ invariant and acts transitively on each level set of $x_0$. This yields an identification
	$$N\slash \overline{H}_N\cong [-1,1].$$
The $\overline{H}_N$-orbits lying over the points in $(-1,1)$ are copies of $S^{n-1}$, and the orbits lying over $\pm 1$ are fixed points (corresponding to the north and south pole). Similarly we can identify $M\slash \overline{H}_M$ with $[-1,1]$. Of course we can also write $M=X\slash H_M$, and this yields an identification
	$$X\slash(H_M H_N)=M\slash \overline{H}_M.$$
Let $-1<x<1$ and choose a lift $y_M\in M$ of $x$. Equation \ref{eq:metm} shows that $\vol(H_M y_M)=f_M(x)\vol(S^{n-1})$ where $S^{n-1}$ is equipped with the metric $\sum_{i=1}^n dx_i^2$. Similarly if $y_N$ is a lift of $x$ to $N$ we have $\vol(H_N y_N)=f_N(x)\vol(S^{n-1})$. Now choose a common lift $\widetilde{y}$ of $y_M$ and $y_N$ to $X$, i.e. $\widetilde{y}$ is an oriented orthonormal frame at the point $y_M\in M$ and at the point $y_N\in N$. Recall that the volume of a fiber of $X\rightarrow M$ is a fixed constant $\nu>0$. Hence we have
	$$\vol(H_M H_N \widetilde{y})=\nu \vol(\overline{H}_M y_M)=\nu f_M(x) \vol(S^{n-1}).$$
Since the volume of a fiber of $X\rightarrow N$ is also equal to $\nu$, we also have
	$$\vol(H_M H_N \widetilde{y})=\nu \vol(H_N y_N)=\nu f_N(x) \vol(S^{n-1}).$$
It follows that $f_M(x)=f_N(x)$. Hence $M$ and $N$ are isometric.

\subsection*{Case B ($M$ and $N$ are of type (2))} In this case $M$ is diffeomorphic to $L_M\times S^1$ where each copy $L_M\times \{z\}$ of $L_M$ is isometric to a round sphere or projective space. The group $\overline{H}_N$ acts orthogonally on each fiber. However, note that the metric on $M$ is not assumed to be a product metric, but in this case it has to be:
\begin{lemnr} $M$ is isometric to a product $L_M\times S^1$ where $L_M$ is either a round sphere or projective space.\end{lemnr}
\begin{proof} Let $q:M\rightarrow S^1$ be the projection onto the second coordinate. Of course the fibers of $q$ are just the submanifolds $L_M\times \{z\}$ for $z\in S^1$, and form a foliation $\mathcal{L}$ of $M$. Fix an orientation of $L_M$ and define $\SO_\mathcal{L}(M)$ to be the space of pairs $(x,e)$ where $x\in M$ and $e$ is a positively oriented frame for the tangent space at $x$ of the leaf of $\mathcal{L}$ through $x$. There is a natural bundle map $p:\SO_\mathcal{L}(M)\rightarrow M$ defined by $p(x,e):=x$. Further because $\overline{H}_M$ acts isometrically on $M$ preserving the leaves of $\mathcal{L}$, it follows that $\overline{H}_M$ acts on $\SO_\mathcal{L}(M)$.

Of course, explicitly we have $\SO_\mathcal{L}(M)\cong \SO(L_M)\times S^1$, and the bundle map $p:\SO_\mathcal{L}(M)\rightarrow M$ is given by applying the natural bundle map $\SO(L_M)\rightarrow L_M$ to the first coordinate. Next we can explicitly describe the action of $\overline{H}_M$ on $\SO_\mathcal{L}(M)$. Namely, the action of $\overline{H}_M$ on $L_M$ is just the standard action of $\SO(n)$ on $S^{n-1}$ (or the standard action of PSO$(n)$ on $\bbR P^{n-1}$). Using that $\SO(L_M)\cong \SO(n)$ or $\PSO(n)$, we see that $\overline{H}_M$ just acts by left-translations on $\SO(L_M)$. Finally, the action of $\overline{H}_M$ on $\SO_\mathcal{L}(M)\cong \SO(L_M)\times S^1$ is just by left-translations on each copy $\SO(L_M)\times \{z\}$ of $\SO(L_M)$.

The advantage of initially defining $\SO_\mathcal{L}(M)$ more abstractly (in terms of frames for the fibers of $q$), is that we can define an embedding
			$$j: \SO_\mathcal{L}(M)\hookrightarrow \SO(M)$$
in the following way. A point $(x,e)\in \SO_\mathcal{L}(M)$ consists of an oriented orthonormal frame $e$ of the copy of $L_M$ through $x$. Hence $e$ can be extended to a frame for $M$ at $x$ by adding to $e$ the unique unit vector $v\in T_x M$ such  that $(e,v)$ is a positively oriented orthonormal frame for $M$.  We define $j(x,e):=(x,e,v)$. Using that $\overline{H}_M$ preserves each copy $L_M\times \{z\}$ of $L_M$, it is easy to see that $j(\SO_\mathcal{L}(M))$ is an $H_M$-invariant submanifold of $\SO(M)$.  

We equip $\SO_\mathcal{L}(M)$ with the Riemannian metric on $j(\SO_\mathcal{L}(M))$ induced from $\SO(M)$. Since the $H_M$-orbits in $\SO(M)$ are the fibers of the map $\pi_N: X\rightarrow N$, the $H_M$-orbits are totally geodesic in $\SO_\mathcal{L}(M)$ (see Proposition \ref{prop:fibertotgeod}). We conclude that the foliation $\mathcal{F}$ of $\SO_\mathcal{L}(M)$ by $H_M$-orbits is a totally geodesic codimension 1 foliation of $\SO_\mathcal{L}(M)$. Of course this is just the foliation of $\SO_\mathcal{L}(M)=\SO(L_M)\times S^1$ by copies $\SO(L_M)\times \{z\}$ for $z\in S^1$. Consider the horizontal foliation $\mathcal{F}^\perp$ of $\SO_\mathcal{L}(M)$. Since $\mathcal{F}^\perp$ is 1-dimensional, it is integrable.
		
Johnson-Whitt proved that if the horizontal distribution associated to a totally geodesic foliation is integrable, then the horizontal distribution is also totally geodesic \cite[Theorem 1.6]{totgeodfol}. Further they showed that a manifold with two orthogonal totally geodesic foliations is locally a Riemannian product \cite[Proposition 1.3]{totgeodfol}. Therefore $\SO_\mathcal{L}(M)$ is locally a Riemannian product $F\times U$ where $F$ (resp. $U$) is an open neighborhood in a leaf of $\mathcal{F}$ (resp. $\mathcal{F}^\perp$). 
				
Now we show the metric on $M$ has to locally be a product. Recall that the map $p:\SO_\mathcal{L}(M)\rightarrow M$ is defined by $p(x,e)=x$. We have $p=\pi_M\circ j$, where $j:\SO_\mathcal{L}(M)\hookrightarrow \SO(M)$ is the isometric embedding defined above, and $\pi_M:\SO(M)\rightarrow M$ is the natural projection. Since $j$ is an isometric embedding and $\pi_M$ is a Riemannian submersion, it follows that $p$ is also a Riemannian submersion. 

Now let $x\in M$ be any point and choose $\widetilde{x}\in \SO_\mathcal{L}(M)$ with $p(\widetilde{x})=x$. Since the metric on $\SO_\mathcal{L}(M)$ is locally a product, we can choose a neighborhood $\widetilde{U}\times \widetilde{V}$ of $\widetilde{x}$ on which the metric is a product. 

Now let $w=(u,v)\in T_x M\cong T_x \mathcal{L}_x \oplus T_{q(x)} S^1$, where $u\in T_x\mathcal{L}_x$ and $v\in T_{q(x)} S^1$. Let $\widetilde{u}$ (resp. $\widetilde{v}$) be a lift of $u$ (resp. $v$) to $T_{\widetilde{x}} \SO_\mathcal{L}(M)$ that is horizontal with respect to $p$. Set $\widetilde{w}:=(\widetilde{u},\widetilde{v})\in T_{\widetilde{x}} \SO_\mathcal{L} M$, so that $\widetilde{w}$ is a horizontal lift of $w$. Then we have
\begin{align*}
||w||^2 &= ||\widetilde{w}||^2\\
	&=||\widetilde{u}||^2 +||\widetilde{v}||^2\\
	&=||u||^2+||v||^2,
\end{align*}
where on the first and last line we used that $p$ is a Riemannian submersion, and on the second line we used that the metric on $\SO_\mathcal{L}(M)$ is locally a Riemannian product. This shows that the metric on $M$ is locally a product.

It remains to show that the metric on $M$ is globally a product. Recall that $M$ is diffeomorphic to $L_M\times S^1$, and that each copy $L_M\times \{z\}$ (for $z\in S^1$) is isometric to a round sphere or projective space, say with curvature $\kappa(z)$. Therefore to show that the metric is globally a product, it suffices to show that $\kappa$ is constant. This is immediate because the metric on $M$ is locally a product.\end{proof}
	
Of course, the same proof applies to $N$, and shows that $N$ is also isometric to a product $L_N\times S^1$. Further the metrics on the constant curvature spheres or projective spaces $L_M$ and $L_N$ only depend on their curvatures. 
		
\begin{claimnr} $L_M$ and $L_N$ have the same curvature.\end{claimnr}
\begin{proof} Recall that we normalized the Sasaki-Mok-O'Neill metrics on $\SO(M)\cong \SO(N)$ so that the fibers of $\SO(M)\rightarrow M$ and $\SO(N)\rightarrow N$ have volume $\nu$. These fibers are exactly $H_M$ and $H_N$-orbits in $\SO(M)$, and by definition of the Sasaki-Mok-O'Neill metric, the metric restricted to an $H_M$ or $H_N$-orbit is bi-invariant. On the other hand, if we restrict $\pi_M:X\rightarrow M$ to the $H_M$-orbit of a point $x\in X$, we obtain a bundle
\begin{equation}\label{eq:bundle} \pi_M:  H_M x \rightarrow \overline{H}_M \pi_M(x) \cong \mathcal{L}_{\pi_M(x)}.\end{equation}
		 Here $H_M x$ is diffeomorphic to $\SO(n)$ (if the leaves of $\mathcal{L}$ are spheres) or $\text{PSO}(n)$ (if the leaves of $\mathcal{L}$ are projective spaces), and the fiber of the bundle in Equation \ref{eq:bundle} is diffeomorphic to $\SO(n-1)$. 
		 
		 Since the metric on $H_M x$ (viewed as a submanifold of $\SO(M)$) is a bi-invariant metric, the above bundle is isometric to a standard bundle
		 	$$\SO(n)\rightarrow S^{n-1}(r_M) \hspace{1 cm} \text{if } \mathcal{L}_{\pi_M(x)}\cong S^{n-1}$$
		 or 
		 	$$\text{PSO}(n)\rightarrow \bbR P^{n-1}\hspace{1.6 cm} \text{if } \mathcal{L}_{\pi_M(x)}\cong \bbR P^{n-1}$$		 
		 where the base is a round sphere or projective space of some radius $r_M$. It follows that the volume of $H_M x$ only depends on $r_M$. Likewise the volume of $H_N x$ will only depend on the radius $r_N$ of $L_N$. On the other hand we know that $\vol(H_M x)=\vol(H_N x)=\nu$, so we must have that $r_M=r_N$, as desired. 
		 \end{proof}
		 
		 At this point we know that there are $r>0$, $\ell_M>0$ and $\ell_N>0$ such that $M$ is isometric to $S^n(r)\times S^1(\ell_M)$ (or $\bbR P^n(r)\times S^1(\ell_M)$ and $N$ is isometric to $S^n(r)\times S^1(\ell_N)$ (or $\bbR P^n(r)\times S^1(\ell_N)$). It only remains to show that $\ell_M=\ell_N$.
		
		 To see this, we need only recall that by normalization of the Sasaki-Mok-O'Neill metrics, we have $\vol(M)=\vol(N)$ (see Lemma \ref{lem:vol}).

\subsection*{Case C ($M$ and $N$ are of type (3))} The unique torsion-free, index 2 subgroups of $\pi_1(M)$ and $\pi_1(N)$ give double covers $M'$ and $N'$. We claim that the frame bundles $\SO(M')$ and $\SO(N')$ are also isometric. The fiber bundle $\SO(n)\rightarrow X\rightarrow M$ gives
	$$1\rightarrow \bbZ\slash 2\bbZ \rightarrow \pi_1(X)\rightarrow D_\infty\rightarrow 1.$$
Now $\pi_1(\SO(M'))$ and $\pi_1(\SO(N'))$ are both index 2 subgroups of $\pi_1(X)$. Since $M'$ and $N'$ are diffeomorphic to $S^{n-1}\times S^1$ we see that $\pi_1(\SO(M'))\cong (\bbZ\slash 2\bbZ)\times \bbZ$ and likewise for $\pi_1(\SO(N'))$. Therefore $\pi_1(\SO(M'))$ and $\pi_1(\SO(N'))$ correspond to the same index 2 subgroup of $\pi_1(X)$. It follows that $\SO(M')$ and $\SO(N')$ are also isometric.

Since $M'$ and $N'$ are diffeomorphic to $S^{n-1}\times S^1$ and $H_M$ acts on $S^{n-1}$ orthogonally, the argument from Case B applies and yields that $M'$ and $N'$ are isometric to the same product $S^{n-1}\times S^1$. Then $M$ and $N$ are obtained as the quotient of $S^{n-1}\times S^1$ by the map $(v,z)\mapsto (-v,z^{-1})$. Hence $M$ and $N$ are isometric. \end{proof}

\section{Proof for $M$ with positive constant curvature} 
\label{sec:kcnst} 

In the previous section we have proved Theorem \ref{thm:main} in all cases except when $M$ has constant curvature $\frac{1}{2\sqrt{\lambda}}$ or $M$ is a surface. We will resolve the latter case in the next section. In this section we will prove:

\begin{thmnr}Let $M,N$ be closed oriented connected Riemannian $n$-manifolds and assume $M$ has constant curvature $\frac{1}{2\sqrt{\lambda}}$ for some $\lambda>0$. Equip $\SO(M)$ and $\SO(N)$ with Sasaki-Mok-O'Neill metrics using the invariant inner product $\langle \cdot, \cdot\rangle_\lambda$ on $\fro(n)$. Assume $n\neq 2, 3,4,8$. Then $M,N$ are isometric if and only if $\SO(M)$ and $\SO(N)$ are isometric.\label{thm:kcnst}
\end{thmnr}
\begin{proof}
By simultaneously rescaling the metrics on $M$ and $N$ we can assume that the universal cover of $M$ is a round sphere of radius 1. (Note that in the rescaling, we should also rescale the inner product on $\fro(n)$ that is used in the definition of the Sasaki-Mok-O'Neill metric.)

Since $M$ has positive constant curvature, $M$ is a Riemannian quotient of $S^n$ by a finite group of isometries. Since the group of orientation-preserving isometries of $S^n$ is $\SO(n+1)$, we can write $M=S^n\slash \pi_1(M)$ for some (finite) group $\pi_1(M)\subseteq \SO(n+1)$. 

Further we can write $S^n=\SO(n)\backslash \SO(n+1)$ where the quotient is on the left by the standard copy $\SO(n)\subseteq\SO(n+1)$. The action of $\SO(n+1)$ on $S^n$ by isometries is then just the action of $\SO(n+1)$ by right-translations on $\SO(n)\backslash \SO(n+1)$, so that we have 
	$$M\cong \SO(n)\backslash \SO(n+1)\slash \pi_1(M).$$
Passing to the frame bundle, we obtain $X\cong SO(n+1)\slash \pi_1(M)$, where the cover $\SO(n+1)$ is equipped with a bi-invariant metric. Further $N$ is a quotient of $X$ by a group $H_M\cong \SO(n)$ acting effectively and isometrically on $X$.

Consider now the cover $\SO(n+1)\rightarrow X$. The (effective) action of $H_M$ on $X$ lifts to an effective action of a unique connected cover $\hat{H}_M$ of $H_M$ on $\SO(n+1)$. Note that $\SO(n)$ has only one nontrivial connected cover, namely its universal cover $\text{Spin}(n)$. Therefore we have either $\hat{H}_M\cong \SO(n)$ or $\hat{H}_M\cong \text{Spin}(n)$. We can actually describe the action of $\hat{H}_M$ on $\SO(n+1)$ precisely:

\begin{claimnr} $\hat{H}_M$ is isomorphic to $\SO(n)$ and acts on $\SO(n+1)$ by either left- or right-translations.\end{claimnr}
\begin{proof} Consider the full isometry group of $\SO(n+1)$ (with respect to a bi-invariant metric), which has been computed by d'Atri-Ziller \cite{isomliegp}. Namely, they show that the isometry group of a simple compact Lie group $G$ equipped with a bi-invariant metric is
	$$\isom(G)\cong G\rtimes \Aut(G)$$
where the copy of $G$ acts by left-translations on $G$. We apply this to the group $G=\SO(n+1)$. Since $\hat{H}_M$ is connected, it follows that the image of $\hat{H}_M\hookrightarrow \isom(G)$ is contained in the connected component $\isom(G)^0$ of $\isom(G)$ containing the identity. We can explicitly compute $\isom(G)^0$. Namely, since $\Out(G)$ is discrete, $\isom(G)^0$ is isomorphic to 
	$$G\rtimes \Inn(G)\cong (G\times G)\slash Z(G)$$
where $Z(G)$ is the center of $G$, and $Z(G)\hookrightarrow G\times G$ is the diagonal embedding. The two copies of $G$ act by left- and right-translations on $G$. 

It will be convenient to work with the product $G\times G$, rather than $(G\times G)\slash Z(G)$. Note that the preimage of $\hat{H}_M$ under the natural projection
	$$G\times G\rightarrow (G\times G)\slash Z(G)$$
is a (possibly disconnected) cover of $\hat{H}_M$. Let $\widetilde{H}_M$ denote the connected component containing the identity (so $\widetilde{H}_M$ is a connected cover of $H_M$, and hence isomorphic to either $\SO(n)$ or $\text{Spin}(n)$). 

We will first show that $\widetilde{H}_M$ has to be contained in a single factor of $G\times G$. To see this, let $p_i:\widetilde{H}_M\rightarrow G$ be the projection to the $i$th factor (where $i=1,2$). Since $\widetilde{H}_M$ is a simple connected Lie group, $p_i$ either has finite kernel or is trivial. 

Further at least one of the projections has to be faithful: First, if one of the projections is trivial, then $\widetilde{H}_M$ is contained in a single factor, so that the other projection is faithful. Therefore to show one of the projections has to be faithful, it suffices to consider the case where neither projection is trivial, so that both projections have finite kernel. Let $K_i, i=1,2$ be the kernels of the projections of $\widetilde{H}_M$ onto the $i$th factor. Then $K_i$ is a discrete normal subgroup of $\widetilde{H}_M$, and hence central. As discussed above, the only possibilities for $\widetilde{H}_M$ are $\SO(n)$ and $\text{Spin}(n)$. The center $Z(\widetilde{H}_M)$ of $\widetilde{H}_M$ is then
	$$Z(\widetilde{H}_M)\cong \begin{cases} 
	1 					& \text{if }\widetilde{H}_M\cong \SO(n), n \text{ is odd},\\ 
	\bbZ\slash(2\bbZ) 	& \text{if }\widetilde{H}_M\cong \SO(n), n \text{ is even},\\ 
	\bbZ\slash(2\bbZ) 	& \text{if }\widetilde{H}_M\cong \text{Spin}(n), n \text{ is odd},\\
	\bbZ\slash(4\bbZ) & \text{if }\widetilde{H}_M\cong \text{Spin}(n), n \text{ is even}.
	\end{cases}$$
Further, since no nontrivial element of $\widetilde{H}_M$ projects trivially to both factors (for such an element would be trivial in $G\times G$), we must have $K_1\cap K_2=1$. On the other hand, none of the possibilities for $Z(\widetilde{H}_M)$ have two nontrivial subgroups that intersect trivially, so we conclude that $K_1$ or $K_2$ is trivial. Without loss of generality, we assume that $K_1=1$.

Therefore to prove the claim that $\widetilde{H}_M$ is contained in a single factor, we must show that $p_2(\widetilde{H}_M)$ is trivial. Suppose it is not. Then $p_2$ has finite kernel, so $p_2(\widetilde{H}_M)$ is a subgroup of $G=\SO(n+1)$ of dimension $\dim \widetilde{H}_M=\frac{n(n-1)}{2}$. Fortunately, there are very few possibilities by the following fact:
\begin{lemnr}[{\cite[Lemma 1 in II.3]{kotrgps}}] Let $H$ be a closed connected subgroup of $\SO(n+1)$ of dimension $\frac{n(n-1)}{2}$ with $n+1\neq 4$. Then either
		\begin{enumerate}
			\item $H\cong \SO(n)$ and $H$ fixes a line in $\bbR^{n+1}$, or
			\item $H\cong \textrm{Spin}(7)$ (and hence $n+1=8$), and $H$ is embedded in $\SO(8)$ via a spin representation.
		\end{enumerate}
		\label{lem:poss}
	\end{lemnr}
Here we say that a representation of Spin$(n)$ is \emph{spin} if it does not factor through the covering map $\text{Spin}(n)\rightarrow \SO(n)$. To obtain the desired contradiction, we will now consider various cases depending on which of the above possibilities describe $p_1(\widetilde{H}_M)$ and $p_2(\widetilde{H}_M)$. For ease of notation we set $\widetilde{H}_i:=p_i(\widetilde{H}_M)$ for $i=1,2$. Before considering each case separately, let us first make the following basic observation that underlies the argument in each case:

Recall that $H_M$ acts freely on $X$. It follows that that $\widetilde{H}_M\slash (Z(G)\cap \widetilde{H}_M)$ acts freely on $G$: Namely, if $h\in \widetilde{H}_M$ fixes $x\in G$, then the image of $h$ under $\widetilde{H}_M\rightarrow H_M$ fixes the image of $x$ under the covering map $G\rightarrow X$. Since $H_M$ acts freely on $X$, we see that $h$ belongs to the kernel of $\widetilde{H}_M\rightarrow X$. Since the map $G\rightarrow X$ is equivariant with respect to the morphism $\widetilde{H}_M\rightarrow H_M$, it follows that for any $g\in G$, the points $g$ and $h\cdot g$ of $G$ have the same image in $X$. This exactly means that the action of $h$ on $G$ is a deck transformation of the covering $G\rightarrow X$. Since $h$ fixes the point $x\in G$ and any deck transformation that fixes a point is trivial, $h$ acts trivially on $G$. Since the kernel of the action of $G\times G$ on $G$ is the center $Z(G)$, it follows that $h$ is central, as desired.

Therefore if $h=(h_1,h_2)\in \widetilde{H}_M\subseteq G\times G$ fixes a point in $G$, then $h_1=h_2$ and $h_i$ are central in $G$. Since $(h_1,h_2)\cdot g = h_1 g h_2^{-1}$, the stabilizer of $g\in G$ consists exactly of the elements of the form $(h_1, g h_1 g^{-1})$ where $h_1\in G$. Our strategy for obtaining a contradiction in each of the cases below is to find an element $h=(h_1,g h_1 g^{-1})\in \widetilde{H}_M$ but with $h_1\notin Z(G)$.

\subsection*{Case 1 ($\widetilde{H}_1$ and $\widetilde{H}_2$ are both of Type (1) of Lemma \ref{lem:poss})} By assumption, there are nonzero vectors $v_1$ and $v_2\in \bbR^{n+1}$ such that $\widetilde{H}_i\cong \SO(n)$ fixes $v_i$. The representation of $\widetilde{H}_i$ on $(\bbR v_i)^\perp$ is the standard representation of $\SO(n)$. Therefore there is some an intertwiner $T:(\bbR v_1)^\perp \rightarrow (\bbR v_2)^\perp$ of these representations. Recall that an irreducible representation leaves invariant at most one inner product up to positive scalars (for if $Q_1$ and $Q_2$ are linearly independent invariant bilinear forms, then a suitable linear combination $Q=\alpha Q_1+\beta Q_2$ is invariant and degenerate as a bilinear form; the kernel of $Q$ is then a proper invariant subspace). It follows that after possibly replacing $T$ by $\lambda T$ for some $\lambda>0$, the intertwiner $T$ is orthogonal.

We can extend $T$ to an intertwiner $\bbR^{n+1}\rightarrow \bbR^{n+1}$ between $\widetilde{H}_1$ and $\widetilde{H}_2$ by setting $T v_1 := \mu v_2$ for some $\mu \neq 0$. We will denote the extension by $T$ as well. By choosing $\mu$ suitably, we can arrange that $T$ is orthogonal, and after possibly changing the sign of $\mu$, we can also arrange that $\det T=1$. 

The map $T$ then belongs to $\SO(n+1)$, so that we have that 
	$$\widetilde{H}_M=\{(h, T h T^{-1})\mid h\in \widetilde{H}_1\}.$$
As observed above, it follows that $\widetilde{H}_M$ does not act freely on $X$.

\subsection*{Case 2 (At least one of $\widetilde{H}_1$ and $\widetilde{H}_2$ is of Type (2) of Lemma \ref{lem:poss})} Note that it is not possible that $\widetilde{H}_1$ is of Type (1) and $\widetilde{H}_2$ is of Type (2). Namely, in this case we would have that $\widetilde{H}_M\cong \SO(n)$ (because $\widetilde{H}_M\cong \widetilde{H}_1$), but the map $\widetilde{H}_M\rightarrow \widetilde{H}_2$ would be a covering $\SO(n)\rightarrow \text{Spin}(n)$, which is impossible (since the latter is simply-connected but the former is not).

So we must have that $\widetilde{H}_1$ is of Type (2). In particular we have $n=7$. Unfortunately, we cannot immediately apply the same argument as in Case 1, because Spin$(7)$ has multiple faithful representations of dimension 8. This difficulty is resolved by passing to a suitable subgroup of Spin$(7)$: Namely given a spin representation of Spin$(7)$, the stabilizer of any nonzero $v\in \bbR^8$ is isomorphic to the exceptional simple Lie group $G_2$.

For the rest of the proof we fix some nonzero $v\in \bbR^8$ and let $L$ be the stabilizer in $\widetilde{H}_1$ of $v$. We have two representations of $L$ on $\bbR^8$: On the one hand we have $L\subseteq \widetilde{H}_1$. On the other hand we can consider $p_2(p_1^{-1}(L))\subseteq \widetilde{H}_2$. We analyze these representations in turn and will show they are equivalent. Before doing so, it will be helpful to recall some classical facts about the representation theory of $G_2$ (see \cite[Chapter 5]{g2ref}) for (a)-(d) and \cite[Table X.6.IV]{helgason} for (e)):
\begin{enumerate}[(a)]
	\item $G_2$ is obtained as the subgroup of matrices of $\SO(8)$ that preserve the product of the octonions $\mathbb{O}$,
	\item $G_2$ has no nontrivial representations of dimension less than 7,
	\item $G_2$ has a single representation of dimension 7 (the action on the purely imaginary octonions) that by Fact (b) is necessarily irreducible,
	\item $G_2$ has no irreducible representation of dimension 8, and
	\item $G_2$ has trivial center.
\end{enumerate}
We will write $\mathds{1}$ for the trivial representation and $\text{Im}(\mathbb{O})$ for the unique 7-dimensional faithful representation.

Let us now consider the first representation, obtained by considering $L$ as a subgroup of $\widetilde{H}_1$. This representation is automatically faithful and has $\bbR v$ as a trivial summand. The summand $(\bbR v)^\perp$ is therefore a faithful 7-dimensional representation and by Fact (c) equivalent to $\text{Im}(\mathbb{O})$. Therefore the first representation is equivalent to $\mathds{1}\oplus \text{Im } \mathbb{O}$. 

We turn to the second representation, obtained by the map $p_2\circ p_1^{-1}:L\rightarrow \widetilde{H}_2$. This is a map with finite kernel (because $p_2$ has finite kernel and $p_1$ is an isomorphism), so that the kernel is contained in the center. Since $G_2$ has no center (see Fact (e)), it follows that this representation is also faithful. Since $G_2$ has no irreducible representation of dimension 8  (see Fact (d)), we must have that the second representation also decomposes as $\mathds{1}\oplus\text{Im } \mathbb{O}$. Therefore there is an intertwiner $T:\bbR^8\rightarrow \bbR^8$ between these representations. The rest of the argument proceeds exactly as in Case 1.

This concludes the proof that $\widetilde{H}_M$ is contained in one of the factors of $G\times G$. To complete the proof of the claim, we must show that $\widetilde{H}_M\cong \SO(n)$. By the dichotomy from Lemma \ref{lem:poss}, the only other possibility is that $n=7$ and $\widetilde{H}_M$ is given by a spin representation of Spin$(7)$. 

In the latter case, we can see that $N$ has constant positive curvature: Namely, since $M$ has constant curvature, the metric on $X\cong \SO(8)\slash \pi_1(M)$ lifts to a bi-invariant metric on $\SO(8)$ and hence to a bi-invariant metric on Spin$(8)$. On the other hand $N=X\slash H_M$ is finitely covered by $\SO(8)\slash \text{Spin}(7)$, and hence also by Spin$(8)\slash \text{Spin}(7)$. It is well-known that a bi-invariant metric on Spin(8) induces a metric of constant positive curvature on $S^7\cong \text{Spin}(8)\slash\text{Spin}(7)$.

Since $N$ has constant positive curvature, we can write $N=\SO(7)\backslash \SO(8)\slash \pi_1(N)$ for some finite subgroup $\pi_1(N)\subseteq \SO(8)$ acting by right-translations. The frame bundle of $N$ is then $X=\SO(8)\slash \pi_1(N)$ with $H_M\cong \SO(7)$ acting by left-translations. This contradicts that $\widetilde{H}_M$ was given by a spin representation into $\SO(8)$, and hence finishes the proof of the claim.\end{proof}

Since $H_M$ acts by left or right-translations on $\SO(n+1)$, we will identify $H_M$ with a subgroup of $\SO(n+1)$. Then we can conjugate $H_M$ to a standard copy of $\SO(n)$ by an element of $\SO(n+1)$. Therefore without loss of generality we have $N\cong SO(n)\backslash SO(n+1)\slash \pi_1(N)$, and we have an isometry
	$$f: \SO(n+1)\slash \pi_1(M)\cong SO(M) \rightarrow SO(N)\cong \SO(n+1)\slash \pi_1(N).$$
By composing with a left-translation of $\SO(n+1)$, we can assume $f(e \, \pi_1(M))=e \, \pi_1(N)$. It remains to show there is an isometry
	$$M\cong \SO(n)\backslash \SO(n+1)\slash \pi_1(M)\rightarrow \SO(n)\backslash \SO(n+1)\slash \pi_1(N)\cong N.$$
	\begin{claimnr} $f$ lifts to an isometry $\SO(n+1)\rightarrow \SO(n+1)$.\label{cl:lift}\end{claimnr}
	\begin{proof} The universal cover of $\SO(M)$ and $\SO(N)$ is Spin$(n+1)$, so $f$ lifts to a map
		$$\widetilde{f}:\text{Spin}(n+1)\rightarrow \text{Spin}(n+1).$$
	We can choose the lift $\widetilde{f}$ such that $\widetilde{f}(e)=e$, where $e$ is the identity element of $\SO(n+1)$. Note that since $f$ is an isometry, $\widetilde{f}$ is an isometry as well (with respect to a bi-invariant metric on Spin$(n+1)$). As previously mentioned, d'Atri-Ziller computed the group of isometries of a connected compact semisimple Lie group $G$ \cite{isomliegp}. Indeed, Isom$(G)=G\rtimes \Aut(G)$, where the copy of $G$ acts by left-translations. It immediately follows that any isometry fixing the identity element $e$ is an automorphism. Therefore $\widetilde{f}$ is an automorphism of $\text{Spin}(n+1)$. 
	
	Recall that Spin$(n+1)$ has a unique central element $z$ of order 2, and we have $\SO(n+1)=\text{Spin}(n+1)\slash \langle z\rangle$. Since $z$ is the unique central element of order 2, we must have that $\widetilde{f}(z)=z$. It follows that $\widetilde{f}$ descends to an automorphism of $\SO(n+1)$, as desired.\end{proof}
	
	 Let
 	$$\hat{f}:\SO(n+1)\rightarrow \SO(n+1)$$
 denote a lift of $f$. As above, by choosing an appropriate lift, we can assume that $\hat{f}(e)=e$, and hence that $\hat{f}$ is an automorphism of $\SO(n+1)$ (here we again used the computation of d'Atri-Ziller of the isometry group of $\SO(n+1)$). Because $\hat{f}$ is a lift of $f$, we know that $\hat{f}$ restricts to an isomorphism $\pi_1 M\rightarrow \pi_1 N$. 
 
 Since $\hat{f}$ is an automorphism of $\SO(n+1)$, there is some $g\in \SO(n+1)$ such that $\hat{f}(\SO(n))=g \SO(n) g^{-1}$. Here, as well as well as below, we identify $\SO(n)$ with a fixed standard copy in $\SO(n+1)$. Define a map
 	$$\hat{\varphi}:\SO(n+1)\rightarrow \SO(n+1)$$
by $\hat{\varphi}(x):=g^{-1} \hat{f}(x)$. 

\begin{claimnr} We have
	\begin{enumerate}
	\item $\hat{\varphi}$ is an isometry, 
	\item for any $x\in \SO(n+1)$, we have $\hat{\varphi}(\SO(n) x)=\SO(n) \hat{\varphi}(x)$, and 
	\item for any $x\in \SO(n+1)$, we have $\hat{\varphi}(x\, \pi_1(M)) = \hat{\varphi}(x)\pi_1(N)$.
	\end{enumerate}
\label{cl:equivar}
\end{claimnr}
\begin{proof}[Proof of (1)] Since left-translation by $g$ is an isometry of $\SO(n+1)$, and $\hat{f}$ is also an isometry of $\SO(n+1)$, it follows that the map $\hat{\varphi}$ is an isometry. 
\subsubsection*{Proof of (2)} Let $x\in \SO(n+1)$. We have $\hat{\varphi}(\SO(n) x)= g^{-1} \hat{f}(\SO(n) x)$. Since $\hat{f}$ is an automorphism of $\SO(n+1)$, we then have
	$$\hat{\varphi}(\SO(n) x) = g^{-1} \hat{f}(\SO(n)) \hat{f}(x).$$
Using that $\hat{f}(\SO(n))=g \SO(n) g^{-1}$, we see that
	$$\hat{\varphi}(\SO(n) x)= \SO(n) g^{-1}\hat{f}(x)= \SO(n) \hat{\varphi}(x).$$
\subsubsection*{Proof of (3)} Let $x\in\SO(n+1)$. This is similar to the proof of (2), but now using that $\hat{f}(\pi_1(M))=\pi_1(N)$. We have
	\begin{align*}
	\hat{\varphi}(x \pi_1(M))	&=g^{-1}\hat{f}(x \, \pi_1(M))\\
					&=g^{-1}\hat{f}(x)\hat{f}(\pi_1(M))\\
					&=\hat{\varphi}(x) \pi_1(N).
	\end{align*}
\end{proof}
From Properties (2) and (3) of Claim \ref{cl:equivar}, it is immediate that $\hat{\varphi}$ descends to a map 
	$$\overline{\varphi}:\SO(n)\backslash \SO(n+1)\slash \pi_1(M)\rightarrow \SO(n) \backslash \SO(n+1)\slash \pi_1(N).$$
\begin{claimnr} $\overline{\varphi}$ is an isometry $M\rightarrow N$.\end{claimnr}
\begin{proof} Recall that at the end of Case 1 of the proof of Theorem \ref{thm:mainmost}, we showed that an isometry $X\rightarrow X$ that maps the fibers of $\pi_M:X\rightarrow M$ to the fibers of $\pi_N:X\rightarrow N$, descends to an isometry $M\rightarrow N$. 

In the current setting, the map $\hat{\varphi}:\SO(n+1)\rightarrow \SO(n+1)$ descends to a map
	$$\varphi:X\cong \SO(n+1)\slash \pi_1(M)\rightarrow \SO(n+1)\slash\pi_1(N)\cong X$$
by Property (3) of Claim \ref{cl:equivar}. Since $\hat{\varphi}$ is an isometry and the maps 
	$$\SO(n+1)\rightarrow \SO(n+1)\slash \pi_1(M) \hspace{1 cm} \text{and} \hspace{1 cm} \SO(n+1)\rightarrow \SO(n+1)\slash \pi_1(N)$$
are Riemannian coverings, it follows that $\varphi$ is an isometry.

Therefore to prove the claim, it suffices to show that $\varphi$ maps fibers of $X\rightarrow M$ to fibers of $X\rightarrow N$. Under the identifications $X\cong \SO(n+1)\slash \pi_1(M)$ and $M\cong \SO(n)\backslash \SO(n+1)\slash \pi_1(M)$, the map $X\rightarrow M$ is just the natural orbit map
	$$\SO(n+1)\slash \pi_1(M) \rightarrow \SO(n)\backslash \SO(n+1)\slash \pi_1(M).$$
Therefore the fibers of $X\rightarrow M$ are exactly the $\SO(n)$-orbits in $\SO(n+1)\slash \pi_1(M)$ (under the action by left-translation). Likewise, the fibers of $X\rightarrow N$ are the $\SO(n)$-orbits in $\SO(n+1)\slash \pi_1(N)$ under the action by left-translation. It follows immediately from Property (3) of Claim \ref{cl:equivar} that $\hat{\varphi}$ maps $\SO(n)$-orbits to $\SO(n)$-orbits, and hence so does $\varphi$. \end{proof}

We have shown that the map $\overline{\varphi}$ is an isometry $M\rightarrow N$, so that $M$ and $N$ are isometric, which finishes the proof of Theorem \ref{thm:kcnst}. \end{proof}

\section{Proof of the main theorem for surfaces}
\label{sec:surf}

In this section we prove Theorem \ref{thm:main} for surfaces. We cannot use the Takagi-Yawata theorem (Theorem \ref{thm:tyexist}) that computes $i(X)$ in this situation, but instead we use the classification of surfaces and Lie groups in low dimensions.

Let $M$ and $N$ be closed oriented surfaces with $\SO(M)\cong SO(N)$. Therefore $M$ and $N$ are each diffeomorphic to one of $S^2$, $T^2$ or $\Sigma_g$ with $g\geq 2$. We know that
	\begin{itemize}
		\item $\SO(S^2)$ is diffeomorphic to $\SO(3)$,
		\item $\SO(T^2)$ is diffeomorphic to $T^3$, and
		\item $\SO(\Sigma_g)$ is diffeomorphic to $T^1 \Sigma_g=\PSL_2\bbR\slash \Gamma$ for a cocompact torsion-free lattice $\Gamma\subseteq \PSL_2\bbR$.
	\end{itemize}
In particular the diffeomorphism type of the frame bundle of a surface determines the diffeomorphim type of the surface. It follows that $M$ and $N$ are diffeomorphic.
	
Consider the Lie algebra of Killing fields $i(X)$ of $X$. Then $i(X)$ contains the (1-dimensional) subalgebras $i_V^M$ and $i_V^N$. If $i_V^M=i_V^N$, then we proceed as in Case 1 in Section \ref{sec:main}, and we find that $M$ and $N$ are isometric. Therefore we will assume that $i_V^M\neq i_V^N$. In particular we must have $\dim i(X)\geq 2$.

As before, let $H_M$ (resp. $H_N$) be the subgroup of $\isom(X)$ obtained by exponentiating the Lie algebra $i_V^N$ (resp. $i_V^M$). Then $H_M$ and $H_N$ are closed subgroups of $\isom(X)$ isomorphic to $S^1$.

We will now consider each of the possibilities of the diffeomorphism types of $M$ and $N$, and prove that $M$ and $N$ have to be isometric. 

\subsection*{Case 1 ($M$ and $N$ are diffeomorphic to $\Sigma_g,~ g\geq 2$)} Then $X=T^1\Sigma_g$ is a closed aspherical manifold. Conner and Raymond proved \cite{actasphmnfd} that if a compact connected Lie group $G$ acts effectively on a closed aspherical manifold $L$, then $G$ is a torus and $\dim G\leq \rk_\bbZ Z(\pi_1 L)$, where $Z(\pi_1 L)$ is the center of $\pi_1(L)$. In particular we find that $\dim i(X)\leq \rk_\bbZ Z(\pi_1 T^1\Sigma_g)=1$. This contradicts our assumption that $\dim i(X)\geq 2$.

\subsection*{Case 2 ($M$ and $N$ are diffeomorphic to $S^2$)} Let $G$ be the connected component of $\isom(X)$ containing the identity. Then $G$ is a compact connected Lie group acting effectively and isometrically on $X=\SO(3)$, and $G$ contains $H_M$ and $H_N$.

If $\dim G=2$, then $G$ is a 2-torus. In particular $H_M$ and $H_N$ centralize each other. Therefore $H_N$ acts on $X\slash H_M=N$ and similarly $H_M$ acts on $M$. The kernel of either of these actions is $H_M\cap H_N$, which is a finite subgroup of both $H_M$ and $H_N$. 

Since an $S^1$-action on $S^2$ has at least one fixed point (because $\chi(S^2)\neq 0$), we see that $N\slash H_N \cong [-1,1]\cong M\slash H_M$. It is then straightforward to see that the metric on $M$ (resp. $N$) is of the form
	$$ds_M^2 = f_M(x_0)(dx_0^2 + dx_1^2+dx_2^2)$$
(resp. $ds_N^2 =f_N(x_0)(dx_0^2 + dx_1^2+dx_2^2)$) as in Theorem \ref{thm:3case}.(1). We can apply the reasoning from Case A of the proof of Case 3(b) in Section \ref{sec:main} to show $M$ and $N$ are isometric.

Therefore we will assume $\dim G\geq 3$. In addition we know that $\dim G\leq 6$ by Theorem \ref{thm:maxisom}. Finally, we must have $\text{rank}(G)\leq 2$: Namely let $T$ be a maximal torus in $G$ containing $H_N$. Since $T$ centralizes $H_N$, the group $T\slash H_N$ acts effectively on $M$. However, a torus of dimension $\geq 2$ does not act effectively on $S^2$. (To see this, note that any 1-parameter subgroup $H$ has a fixed point on $S^2$ because the Killing field generated by $H$ has a zero on $S^2$. We can take $H$ to be dense, so that the entire torus fixes a point $p$. The isotropy action on $T_p M$ is a faitful 2-dimensional representation of the torus, which is impossible unless the torus is 1-dimensional.)

Therefore the only possibilities for the Lie algebra $\frg$ of $G$ are 
	\begin{enumerate}[(a)]
		\item $\frg\cong \frak{o}(3)$,
		\item $\frg\cong \bbR\oplus \frak{o}(3)$, and
		\item $\frg\cong \frak{o}(3)\oplus \frak{o}(3)$.
	\end{enumerate}
We will now consider each of these cases separately.
\subsection*{Case 2(a) ($\frg\cong \frak{o}(3)$)} Since $G$ has rank 1, $H_M$ and $H_N$ are both maximal tori of $G$. Since all maximal tori are conjugate, there is some element $g\in G$ so that $g H_N g^{-1}=H_M$. Then $g$ induces an obvious isometry $M\rightarrow N$.

\subsection*{Case 2(b) ($\frg\cong \bbR\oplus \frak{o}(3)$)} We can conjugate $H_N$ by an element $g\in G$ so that $g H_N g^{-1}$ and $H_M$ centralize each other. Then either $g H_N g^{-1}=H_M$, in which case $g$ induces an isometry $M\rightarrow N$, or $g H_N g^{-1}$ and $H_M$ generate a 2-torus. In the latter case the argument above in case $\dim G=2$ shows that the metrics on $M$ and $N$ are of the form
	$$ds^2 = f(x_0)(dx_0^2 + dx_1^2 + dx_2^2)$$
for some function $f$ on $[-1,1]$. Then the argument of Case A of Case 3(b) in Section \ref{sec:main} shows that $M$ and $N$ are isometric.
\subsection*{Case 2(c) ($\frg\cong \frak{o}(3)\oplus \frak{o}(3)$)} In this case $\dim \isom(X)=6$ is maximal. By Theorem \ref{thm:maxisom} the metric on $X$ has positive constant curvature. Therefore the metrics on $M$ and $N$ have positive constant curvature. Further by Lemma \ref{lem:vol} we have $\vol(M)=\vol(N)$. It follows that $M$ and $N$ are isometric.

\subsection*{Case 3 ($M$ and $N$ are diffeomorphic to $T^2$)} In this case $X$ is diffeomorphic to $T^3$. Again by the theorem of Conner-Raymond \cite{actasphmnfd} on actions of compact Lie groups on aspherical manifolds, we know that a connected compact Lie group acting effectively on a torus is a torus. Therefore $H_N$ and $H_M$ centralize each other, so $H_M$ and $H_N$ generate a 2-torus. Further $H_M$ acts on $M=X\slash H_N$ with finite kernel $H_M\cap H_N$. Again by \cite{actasphmnfd}, the action of $H_M\slash (H_M\cap H_N)$ on $M$ is free, so that the map
	$$M\rightarrow M\slash H_M\cong S^1$$
is a fiber bundle (with $S^1$ fibers). The argument of Case B in Case 3(b) of the proof of Theorem \ref{thm:main} constructs a (unit length) Killing field $X_M$ on $M$ that is orthogonal to the fibers of $M\rightarrow M\slash H_M$. It follows that $M$ is a 2-torus equipped with a translation-invariant metric. Any such metric is automatically flat: Namely, because the torus is abelian, the metric is automatically bi-invariant. Then we use the following general fact: On a Lie group $H$ with a bi-invariant metric, the Lie structure and sectional curvature are tied by the identity (see e.g. \cite[Proposition 3.4.12]{petersen})
$$K(X,Y)=\frac{1}{4}||[X,Y]||^2$$
where $X,Y$ are orthonormal vectors in $\frh$ (which is identified with $T_e H$ in the usual way), and the bracket is the Lie bracket. Since $T^2$ is abelian, it follows that the sectional curvatures with respect to any invariant metric vanish.

We conclude that $M$ is flat. By carrying out the same construction for $N$, we obtain a Killing field $X_N$ on $N$ that is orthogonal to the $H_N$-orbits, and we conclude that $N$ is flat.

To show that $M$ and $N$ are isometric, recall that the isometry type of a flat 2-torus is specified by the length of two orthogonal curves that generate its fundamental group. For $M$ we can consider the curves given by an $H_M$-orbit on $M$ and an integral curve of $X_M$. Similarly for $N$ we can consider an $H_N$-orbit on $N$ and an integral curve of $X_N$.

For $x\in M$ and $\widetilde{x}\in X$ lying over $x$, we have a covering 
	$$H_M \widetilde{x}\rightarrow H_M x$$
of degree $|H_N\cap H_M|$. Recall that the $H_M$-orbits in $X$ have a fixed volume $\nu$, since we normalized the Sasaki-Mok-O'Neill metric on $X$ in this way. Therefore
	$$\ell(H_M x)=\frac{1}{|H_N\cap H_M|}\ell(H_M\widetilde{x})=\frac{\nu}{|H_N\cap H_M|}.$$
Combining this with a similar computation for the length of an $H_N$-orbit on $N$ gives $\ell(H_M x)=\ell(H_N y)$ for every $x\in M$ and $y\in N$. Therefore we see that the length of an integral curve of $X_M$ (resp. $X_N$) is $\frac{\vol(M)}{\ell(H_M\cdot x)}$ for $x\in M$ (resp. $\frac{\vol(N)}{\ell(H_N \cdot y)}$ for $y\in N$). Since $\vol(M)=\vol(N)$ by Lemma \ref{lem:vol}, it follows that $M$ and $N$ are isometric.

\bibliographystyle{alpha}
\bibliography{isomframebib}

\end{document}